\newtheorem{theorem}{Theorem}[section]
\newtheorem{corollary}[theorem]{Corollary}
\newtheorem{lemma}[theorem]{Lemma}
\newtheorem{definition}[theorem]{Definition}
\newtheorem{remark}[theorem]{Remark}
\theoremstyle{plain}
\newtheorem*{theorem*}{Theorem}
\newtheorem*{question*}{Question}
\newtheorem*{example*}{Example}
\def\dashint{\,\ThisStyle{\ensurestackMath{%
			\stackinset{c}{.2\LMpt}{c}{.5\LMpt}{\SavedStyle-}{\SavedStyle\phantom{\int}}}%
		\setbox0=\hbox{$\SavedStyle\int\,$}\kern-\wd0}\int}
\newcommand{\zed}{\mathbb{Z}}
\newcommand{\dyprodhp}{H_d^p(\mathbb{R}\otimes\mathbb{R})}
\newcommand{\dotdyprodhp}{\dot{H}_d^p(\mathbb{R}\otimes\mathbb{R})}
\newcommand{\dyprodhpn}[1]{\|#1\|_{\dyprodhp}}
\newcommand{\dotdyprodhpn}[1]{\|#1\|_{\dotdyprodhp}}
\newcommand{\dotdyprodhq}{\dot{H}_d^q(\mathbb{R}\otimes\mathbb{R})}
\newcommand{\dotdyprodhqn}[1]{\|#1\|_{\dotdyprodhq}}
\newcommand{\dyprodhr}{H_d^r(\mathbb{R}\otimes\mathbb{R})}
\newcommand{\dotdyprodhr}{\dot{H}_d^r(\mathbb{R}\otimes\mathbb{R})}
\newcommand{\dotdyprodhrn}[1]{\|#1\|_{\dotdyprodhr}}
\newcommand{\dyprodbmo}{\text{BMO}_d(\mathbb{R}\otimes\mathbb{R})}
\newcommand{\dyprodbmon}[1]{\|#1\|_{\dyprodbmo}}
\newcommand{\avr}[2]{\left\langle#1\right\rangle_{#2}}
\newcommand{\lprtwo}{L^p(\mathbb{R}^2)}
\newcommand{\lprtwonorm}[1]{\|#1\|_{\lprtwo}}
\newcommand{\lqrtwo}{L^q(\mathbb{R}^2)}
\newcommand{\lrrtwo}{L^r(\mathbb{R}^2)}
\newcommand{\lrrtwonorm}[1]{\|#1\|_{\lrrtwo}}
\newcommand{\ltwotwo}{L^2(\mathbb{R}^2)}
\newcommand{\R}{\mathbb{R}}
\newcommand{\D}{\mathcal{D}}
\newcommand{\Dtwo}{\D\otimes\D}
\begin{document}
	
	\title{The Operator norm of paraproducts on bi-parameter Hardy spaces}
	\author{Shahaboddin Shaabani}
	\address{Department of Mathematics and Statistics\\Concordia University}
	\email{shahaboddin.shaabani@concordia.ca}	
	\keywords{Hardy spaces, Paraproducts}
	\subjclass{42B30, 42B35}
	\date{}
	\begin{abstract}
	It is shown that for $0<p,q,r<\infty$, with $\frac{1}{q} = \frac{1}{p} + \frac{1}{r}$, the operator norm of the dyadic paraproduct of the form
	\[
	\pi_g(f) := \sum_{R \in \Dtwo} g_R \avr{f}{R} h_R,
	\]
	from the bi-parameter dyadic Hardy space $\dyprodhp$ to $\dotdyprodhq$ is comparable to $\dotdyprodhrn{g}$. We also prove that for all $0 < p < \infty$, there holds 
	\[
	\dyprodbmon{g} \simeq \|\pi_g\|_{\dyprodhp \to \dotdyprodhp}.
	\]
	Similar results are obtained for bi-parameter Fourier paraproducts of the same form.
	
	\end{abstract}
	\maketitle

\section{Introduction}

The one-parameter dyadic paraproduct operator with symbol $g$ is defined as  
\[
\pi_g(f) := \sum_{I \in \mathcal{D}} g_I \avr{f}{I} h_I,
\]
where $\mathcal{D}$ denotes the collection of all dyadic intervals on the real line, $h_I$ is the $L^2$-normalized Haar wavelet associated with the interval $I$, $g_I$ represents the Haar coefficient of $g$, and $\avr{f}{I}$ is the average of $f$ over the interval $I$. Bilinear forms (in terms of $f$ and $g$) of this type are among the most important ones in harmonic analysis, with many applications to PDEs. This is mainly due to the fact that many bilinear forms can be decomposed in terms of paraproducts and their adjoints. For instance, the product of two functions $f$ and $g$ can be written as  
\[
fg=\pi_g(f)+\pi_f(g)+\pi_g^*(f),
\]
where $\pi_g^*$ denotes the adjoint of $\pi_g$. For this reason, the boundedness properties of these operators play a crucial role in analyzing various problems in harmonic analysis and PDEs. We refer the reader to \cite{MR2682821} for a brief introduction and to \cite{MR3052499} for an excellent exposition of this subject. See also \cite{MR0518170,MR0631751,MR1853518,MR1934198,MR2381883,MR2730492} for various boundedness properties of paraproducts. Because of their importance, it is natural to wonder about the norm of these operators acting between various function spaces. The first result in this direction appeared in \cite{MR2381883}, where it was shown that  
\[
\|\pi_g\|_{L^p(\R)\to \dot{L}^p(\R)}\simeq \|g\|_{\text{BMO}_d(\R)}, \quad 1<p<\infty.
\]  
Here, $\dot{L}^p(\mathbb{R})$ is the Lebesgue space $L^p(\mathbb{R})$ modulo constants, with the quotient norm defined as
 \[
 \|f\|_{\dot{L}^p(\mathbb{R})} := \inf_{c \in \mathbb{R}} \|f - c\|_{L^p(\mathbb{R})},
 \]
 and $\text{BMO}_d(\R)$ stands for the dyadic $\text{BMO}$, the space of functions with bounded mean oscillation on the dyadic intervals of the real line. In addition, recently in \cite{hänninen2023weighted}, the authors extended the above result to the off-diagonal range of exponents. More precisely, they showed, among other things, that
 \[
 \|\pi_g\|_{L^p(\mathbb{R}) \to \dot{L}^q(\mathbb{R})} \simeq \|g\|_{\dot{L}^r(\mathbb{R})} \quad \text{where} \quad \frac{1}{q} = \frac{1}{p} + \frac{1}{r}, \quad 1 < p, q, r < \infty.
 \]
The next progress in the subject comes from our recent work in \cite{shaabani2024operator}, where our main contribution was to replace Lebesgue spaces with Hardy spaces and to lift the restrictions on the exponents on the right-hand side of the above two results. Specifically, it was shown that 
\begin{align}
        &\|\pi_g\|_{H_d^p(\mathbb{R}) \to \dot{H}_d^p(\mathbb{R})} \simeq \|g\|_{\text{BMO}_d(\mathbb{R})}, \quad 0<p<\infty,\label{mypreviousresult1}\\
    	&\|\pi_g\|_{H_d^p(\mathbb{R}) \to \dot{H}_d^q(\mathbb{R})} \simeq \|g\|_{\dot{H}_d^r(\mathbb{R})}, \quad \frac{1}{q} = \frac{1}{p} + \frac{1}{r}, \quad 0 < p, q, r < \infty,\label{mypreviousresult}
\end{align}
where the $H_d^p(\mathbb{R})$-norm is defined as the $L^p(\mathbb{R})$-norm of the dyadic maximal function, and the $\dot{H}_d^p(\mathbb{R})$-norm refers to the $L^p(\mathbb{R})$-norm of the dyadic square function. We also obtained similar results for Fourier paraproducts in the continuous setting, and we refer the reader to \cite{shaabani2024operator} for precise statements in this context.\\

The proof idea in \cite{hänninen2023weighted}, which is similar to that in \cite{MR4338459}, heavily relies on the duality of Lebesgue spaces. However, as demonstrated in \cite{shaabani2024operator}, this approach fails when $0 < q < 1$. In \cite{shaabani2024operator}, we therefore adopted a direct method. By using a suitable pointwise sparse domination of the square function of the symbol $g$, we were able to construct a test function $f$ such that, when testing the operator $\pi_g$ on $f$, we could recover the $L^r(\mathbb{R})$-norm of the square function of $g$, achieving the desired result.\\

In the present paper, we focus on operators acting on bi-parameter Hardy spaces. There are various types of bi-parameter paraproducts, and the one we study is the most similar to the one-parameter operator. It is defined as
\[
\pi_g(f) := \sum_{R \in \Dtwo} g_R \avr{f}{R} h_R,
\]
where the sum is taken over the collection of all dyadic rectangles in the plane (see the next section for precise definitions and notation). We refer the reader to \cite{MR3052499} for an exposition of multi-parameter paraproducts. See also \cite{MR2134868,MR2313844,MR2320408}. To obtain an analog of \eqref{mypreviousresult1} and \eqref{mypreviousresult} for this operator, we employ a similar strategy and demonstrate how the one-parameter arguments in \cite{shaabani2024operator} can be modified to work in the multi-parameter setting. As in our previous work, we first present our arguments in the dyadic setting and then extend them to the continuous setting. Before doing so, let us fix some definitions and notation.
	
\section{preliminaries}	
	
As mentioned before, by $\D$ we mean the collection of all dyadic intervals in $\R$, and $\Dtwo$ stands for the collection of all dyadic rectangles in the plane. For $f\in L_{loc}^1(\R^2)$ and $E$ a measurable set of finite positive measure, we denote the average of $f$ over $E$ by 
\[
\avr{f}{E}:=|E|^{-1}\int_{E}f.
\]	
For such a function, the bi-parameter dyadic maximal operator is defined as
\[
M_d(f)(x):=\sup_{\substack{x\in R\\R\in\Dtwo}}|\avr{f}{R}|.
\]
When the supremum is taken over all rectangles (not necessarily dyadic) with sides parallel to the axes, the resulting operator is denoted by $M$ and is referred to as the strong maximal operator. We also need the following version of this operator:
\[
M_s(f)(x):=M(|f|^s)^\frac{1}{s}(x), \quad 0<s<\infty,
\]
as well as the bi-parameter Fefferman-Stein inequality, which states that
\[
\lprtwonorm{\big(\sum_{j}M_s(f_j)^2\big)^{\frac{1}{2}}}\lesssim \lprtwonorm{\big(\sum_{j}|f_j|^2\big)^{\frac{1}{2}}}, \quad 0<s<p<\infty.
\]
The next notation deals with an enlargement of open sets $\Omega$, which occurs quite often in multi-parameter theory. We use the following somewhat standard notation:
\[
\tilde{\Omega}:=\big\{M(\chi_{\Omega})>\frac{1}{2}\big\},
\]
and recall that
\[
\Omega\subset \tilde{\Omega}, \quad |\tilde{\Omega}|\lesssim |\Omega|,
\]
which follows from the boundedness of $M$ on, say, $L^2(\R^2)$. See \cite{MR0864369,MR766959,MR664621} for the proof of the above assertions and other properties of maximal operators in the product setting. \\

Now, to modify our one-parameter arguments in \cite{shaabani2024operator}, we need to generalize some properties of sparse families of cubes to simple families of measurable sets.
\begin{definition}
	A sequence of measurable sets $\left\{\Omega_i\right\}_{i\ge0}$ of finite measure is called contracting if
	\[
	\Omega_{i+1}\subset \Omega_i, \quad |\Omega_{i+1}|\le\frac{1}{2}|\Omega_{i}|, \quad i=0,1,2,\ldots
	\]
\end{definition}

In the next lemma, we show that when dealing with $L^p$ norms, a contracting family can be treated as a disjoint family.\begin{lemma}\label{lpsparselemma}
	Let $\left\{\Omega_i\right\}_{i\ge0}$ be a contracting family. Then, for any sequence of nonnegative numbers $\{a_i\}_{i\ge0}$ and any $0<p<\infty$, we have
	\[
	\|\sum_{i\ge0}a_i\chi_{\Omega_i}\|_{L^p}\simeq_p \big(\sum_{i\ge0} a_i^p|\Omega_{i}|\big)^{\frac{1}{p}}.
	\]
\end{lemma}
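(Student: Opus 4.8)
The plan is to replace the overlapping sum by an honestly disjoint one. Set $E_i := \Omega_i \setminus \Omega_{i+1}$; the contracting hypothesis gives $\Omega_{i+1}\subset\Omega_i$, so these sets are pairwise disjoint, and since $|\Omega_{i+1}|\le\frac12|\Omega_i|$ we get $\frac12|\Omega_i|\le|E_i|\le|\Omega_i|$, that is $|E_i|\simeq|\Omega_i|$. Because $|\Omega_i|\le 2^{-i}|\Omega_0|\to0$, the intersection $\bigcap_i\Omega_i$ is null, so the $E_i$ partition $\Omega_0$ up to measure zero, and $\sum_i a_i\chi_{\Omega_i}$ vanishes off $\Omega_0$. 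For $x\in E_j$ one has $x\in\Omega_i$ exactly when $i\le j$ by nestedness, so writing $S_j:=\sum_{i=0}^j a_i$ we obtain $\sum_i a_i\chi_{\Omega_i}=\sum_j S_j\chi_{E_j}$ almost everywhere. Integrating this disjoint sum then yields
\[
\Big\|\sum_{i\ge0}a_i\chi_{\Omega_i}\Big\|_{L^p}^p=\sum_{j\ge0}S_j^p|E_j|\simeq\sum_{j\ge0}S_j^p|\Omega_j|,
\]
so everything reduces to comparing $\sum_j S_j^p|\Omega_j|$ with $\sum_i a_i^p|\Omega_i|$.

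The lower bound is immediate: since the $a_i$ are nonnegative, $S_j\ge a_j$, whence $\sum_j S_j^p|\Omega_j|\ge\sum_j a_j^p|\Omega_j|$, which already gives the direction $\gtrsim$. For the reverse inequality the one fact I will use repeatedly is the geometric tail bound coming from the contracting property: for each fixed $i$,
\[
\sum_{j\ge i}|\Omega_j|\le\sum_{j\ge i}2^{-(j-i)}|\Omega_i|=2|\Omega_i|.
\]
Everything then hinges on controlling the Hardy-type (partial-sum) operator $a\mapsto(S_j)_j$ on the weighted space $\ell^p(|\Omega_j|)$, and it is exactly the geometric decay of $|\Omega_j|$ that tames it.

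I would treat the upper bound in two regimes. When $0<p\le1$, subadditivity of $t\mapsto t^p$ gives $S_j^p=(\sum_{i\le j}a_i)^p\le\sum_{i\le j}a_i^p$, so by interchanging the order of summation and invoking the tail bound,
\[
\sum_j S_j^p|\Omega_j|\le\sum_i a_i^p\sum_{j\ge i}|\Omega_j|\le2\sum_i a_i^p|\Omega_i|,
\]
with an absolute constant. The genuine obstacle is the range $p>1$, where subadditivity fails and one must exploit the decay quantitatively. Here I would insert a geometric weight and apply H\"older with exponents $p,p'$: fixing any $0<\delta<1$,
\[
S_j=\sum_{i=0}^j a_i\,2^{(j-i)\delta/p}2^{-(j-i)\delta/p}\le\Big(\sum_{i=0}^j a_i^p2^{(j-i)\delta}\Big)^{1/p}\Big(\sum_{k\ge0}2^{-k\delta p'/p}\Big)^{1/p'},
\]
the last factor being a finite constant $C_{p,\delta}$. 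Raising to the $p$-th power, multiplying by $|\Omega_j|$, summing in $j$, and interchanging the order of summation reduces matters to
\[
\sum_{j\ge i}2^{(j-i)\delta}|\Omega_j|\le|\Omega_i|\sum_{k\ge0}2^{-k(1-\delta)}=C_\delta'|\Omega_i|,
\]
which converges precisely because $\delta<1$; this yields $\sum_j S_j^p|\Omega_j|\lesssim_p\sum_i a_i^p|\Omega_i|$ and closes the argument. The only point requiring care is the choice of $\delta\in(0,1)$, balancing the two geometric series so that both the H\"older factor and the rearranged tail remain summable; the contracting hypothesis is exactly what makes this balancing possible.
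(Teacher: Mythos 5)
Your proof is correct, and for the substantive direction ($p>1$) it takes a genuinely different route from the paper. Both arguments share the same starting point: the sets $E_i=\Omega_i\setminus\Omega_{i+1}$ are disjoint with $|E_i|\ge\frac12|\Omega_i|$, which gives the lower bound, and subadditivity of $t\mapsto t^p$ handles $0<p\le1$. Where you diverge is in the upper bound for $p>1$. The paper dualizes against $g\in L^{p'}$, introduces the maximal operator $m(g)(x)=\sup_{x\in\Omega_i}\avr{|g|}{\Omega_i}$ adapted to the nested family, and proves its $L^{p'}$-boundedness by interpolating between a trivial $L^\infty$ bound and a weak-$(1,1)$ bound (the latter exploiting that the superlevel sets of $m$ are single $\Omega_{i_0}$'s by nestedness). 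You instead convert the whole problem into a purely discrete one: writing the function as $\sum_j S_j\chi_{E_j}$ with $S_j=\sum_{i\le j}a_i$, you reduce to bounding the partial-sum (Hardy) operator on $\ell^p(|\Omega_j|)$, and you control it by a weighted H\"older/Schur-test argument with the geometric weight $2^{(j-i)\delta}$, $0<\delta<1$, using the decay $|\Omega_j|\le 2^{-(j-i)}|\Omega_i|$ twice to make both geometric series converge. Your version is more elementary and self-contained (no duality, no interpolation, no maximal function), and it makes explicit that the real mechanism is the exponential decay of the measures; the paper's version is closer to the standard sparse-domination toolkit and isolates a reusable object (the operator $m$) that appears elsewhere in multi-parameter arguments. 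Both are complete proofs of the lemma as stated.
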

\begin{proof}
	First, note that 
	\[
	\big(\sum_{i\ge0} a_i^p|\Omega_{i}|\big)^{\frac{1}{p}} \lesssim	\|\sum_{i\ge0}a_i\chi_{\Omega_i\backslash\Omega_{i+1}}\|_{L^p}\le \|\sum_{i\ge0}a_i\chi_{\Omega_i}\|_{L^p}.
	\]
	So it remains to prove the other direction. For $0<p\le1$, from sub-additivity we get
	\[
	\|\sum_{i\ge0}a_i\chi_{\Omega_i}\|_{L^p}^p\le \sum_{i\ge0}\|a_i\chi_{\Omega_i}\|_{L^p}^p=\sum_{i\ge0} a_i^p|\Omega_{i}|.
	\]
	Therefore, we are left to prove
\begin{equation}\label{thedesired}
			\|\sum_{i\ge0}a_i\chi_{\Omega_i}\|_{L^p}\lesssim\big(\sum_{i\ge0} a_i^p|\Omega_{i}|\big)^{\frac{1}{p}}, \quad 1<p<\infty.
\end{equation}
	To this aim, take a function $g\in L^{p'}$, where $p'$ is the H\"older's conjugate of $p$ and note that
	\[
	\int g\sum_{i\ge0}a_i\chi_{\Omega_i}\le2\sum_{i\ge0}a_i\avr{|g|}{\Omega_i}|\Omega_{i}\backslash\Omega_{i+1}|\le 2\int m(g) \sum_{i\ge0}a_i\chi_{\Omega_i\backslash\Omega_{i+1}},
	\]
	where in the above 
	\[
	m(g)(x):=\sup_{\substack{x\in\Omega_{i}\\i\ge0}} \avr{|g|}{\Omega_i}.
	\]
For now, let us assume that this operator is bounded on $L^{p'}$ with norm that depends only on $p$. Then, the above last inequality gives us
\[
\int g\sum_{i\ge0}a_i\chi_{\Omega_i}\le 2 \|\sum_{i\ge0}a_i\chi_{\Omega_i\backslash\Omega_{i+1}}\|_{L^p}\|m(g)\|_{L^{p'}}\lesssim \big(\sum_{i\ge0} a_i^p|\Omega_{i}|\big)^{\frac{1}{p}} \|g\|_{L^{p'}},
\]
which implies the desired inequality in \eqref{thedesired}. To show why \( m \) is bounded on Lebesgue spaces, note that \( m \) is \( L^{\infty} \)-bounded with norm 1 and is of weak-\((1,1)\) type. These two facts, along with interpolation, imply that \( m \) is bounded on  \( L^{p'} \) for \( 1 < p < \infty \). The weak-\((1,1)\) bound for \( m \) follows from the fact that for \( \lambda > 0 \), we have
\[
|\{m(g) > \lambda\}| = |\bigcup_{\avr{|g|}{\Omega_i} > \lambda} \Omega_i| = |\Omega_{i_0}| \le \lambda^{-1} \|g\|_{L^1},
\]
where \( i_0 = \min\{i \ge 0 \mid \avr{|g|}{\Omega_i} > \lambda\} \).
This completes the proof.

\end{proof}
Another useful property of contracting families is that their large portions form a Carleson family of sets.
\begin{lemma}\label{largechunksparselemma}
	Let $\left\{\Omega_i\right\}_{i\ge0}$ be a contracting family and suppose $\left\{E_i\right\}_{i\ge0}$ is a family of measurable sets such that for $0<\eta\le1$, we have
	\[
	E_i\subset\Omega_{i}, \quad |E_i|\ge \eta |\Omega_{i}|, \quad i\ge0.
	\]
	Then, for any $A\subset \left\{0,1,2,\ldots\right\}$ we have
	\[
	\sum_{i\in A}|E_i|\le \frac{2}{\eta}|\bigcup_{i\in A}E_i|.
	\]
\end{lemma}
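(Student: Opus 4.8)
The plan is to reduce the whole estimate to the smallest index appearing in $A$ and to exploit the geometric decay of the measures $|\Omega_i|$. Set $i_0:=\min A$. Since the family is contracting, every $i\in A$ satisfies $i\ge i_0$, and iterating the halving condition $|\Omega_{i+1}|\le\frac12|\Omega_i|$ yields $|\Omega_i|\le 2^{-(i-i_0)}|\Omega_{i_0}|$ for all $i\ge i_0$.

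First I would bound the left-hand side from above. Using $E_i\subset\Omega_i$ together with the decay just noted,
\[
\sum_{i\in A}|E_i|\le\sum_{i\in A}|\Omega_i|\le\sum_{i\ge i_0}2^{-(i-i_0)}|\Omega_{i_0}|=2|\Omega_{i_0}|,
\]
so the entire sum is controlled by the single measure $|\Omega_{i_0}|$, and the mutual overlap of the sets $E_i$ never needs to be examined.

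Next I would bound $|\Omega_{i_0}|$ from below by the union. Since $i_0\in A$, the set $E_{i_0}$ is one of the sets forming the union, hence
\[
\Big|\bigcup_{i\in A}E_i\Big|\ge|E_{i_0}|\ge\eta|\Omega_{i_0}|,
\]
where the last inequality is the large-chunk hypothesis applied at the index $i_0$. Combining the two displays gives $\sum_{i\in A}|E_i|\le 2\eta^{-1}\big|\bigcup_{i\in A}E_i\big|$, which is exactly the claim with implicit constant $2/\eta$.

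I do not expect a serious obstacle here; the only point worth emphasizing is the conceptual one that, in contrast to the usual sparseness arguments for cubes, one does not need the $E_i$ to be pairwise essentially disjoint. The geometric decay of $|\Omega_i|$ makes $\sum_{i\in A}|\Omega_i|$ comparable to its largest term $|\Omega_{i_0}|$, and the single large chunk $E_{i_0}$ already accounts for a fixed proportion $\eta$ of that term, which is all that the estimate requires.
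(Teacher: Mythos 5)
Your argument is correct, and it is genuinely simpler than the one in the paper. You collapse the whole sum onto the single index $i_0=\min A$: the geometric decay $|\Omega_i|\le 2^{-(i-i_0)}|\Omega_{i_0}|$ gives $\sum_{i\in A}|E_i|\le 2|\Omega_{i_0}|$, and the large-chunk hypothesis is invoked only once, at $i_0$, to get $|\Omega_{i_0}|\le\eta^{-1}|E_{i_0}|\le\eta^{-1}|\bigcup_{i\in A}E_i|$. The paper instead chooses $k$ with $2^{-k}\eta^{-1}\le\frac14$, splits $A$ into $k$ arithmetic subsequences, and uses the large-chunk hypothesis at every index to force geometric decay of the $|E_i|$ themselves along each subsequence, summing each to $\frac43$ times its first term. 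The payoff of the paper's extra work is a better constant, roughly $\frac43\log_2(4/\eta)$ versus your $2/\eta$; your approach buys brevity and uses the hypothesis $|E_i|\ge\eta|\Omega_i|$ only at one index. Since the lemma asserts only $\lesssim_\eta$ and it is applied in the paper with $\eta$ an absolute constant, either constant is adequate, so your proof fully establishes the statement.
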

\begin{proof}
	Let $k=\min\{i\in A\}$ and note that
    \[
    \sum_{i\in A}|E_i|\le \sum_{i\in A}|\Omega_i|\le 2|\Omega_k|\le \frac{2}{\eta} |E_k|\le \frac{2}{\eta}|\bigcup_{i\in A}E_i|.
    \]
	
\end{proof}
For more on Carleson families of measurable sets, we refer the reader to \cite{MR3893778} and the references therein.

\subsection{Bi-parameter Dyadic Hardy Spaces}

Next, we turn to the definition of bi-parameter Hardy spaces in the dyadic setting. For a dyadic rectangle \(R\in\Dtwo\), we define
\[
h_R := h_I \otimes h_J, \quad \text{where } R = I \times J,
\]
with \( h_I \) and \( h_J \) being the $L^2$-normalized Haar wavelets associated with intervals \( I \) and \( J \), respectively.
 As it is well-known $\left\{h_R\right\}_{R\in\Dtwo}$ forms an orthonormal basis for $\ltwotwo$ and for $f\in\ltwotwo$ we have
\[
f=\sum_{R\in\Dtwo}f_Rh_R, \quad f_R:=\avr{f,h_R}{}.
\]	
To define dyadic Hardy spaces rigorously, we first make the following definition.

\begin{definition}
	 A dyadic distribution $f$ is a family of real numbers $\left\{f_R\right\}_{R\in\Dtwo}$, and formally is written as
	\[
	f:=\sum_{R\in \mathcal{D}\otimes\mathcal{D}}f_Rh_R.
	\]
\end{definition}	
For such an object the dyadic square function is defined as 
\[
S_d(f)(x):=\big(\sum_{R\in \mathcal{D}\otimes\mathcal{D}}f_R^2\frac{\chi_R}{|R|}(x)\big)^{\frac{1}{2}}.
\]	
\begin{definition}
For $0<p<\infty$, the space $\dyprodhp$ is the completion of the space of all real valued locally integrable functions $f$ with
\[
\dyprodhpn{f}:=\lprtwonorm{M_d(f)}<\infty.
\]
\end{definition}	
	
\begin{definition}
	For $0<p<\infty$, the space $\dotdyprodhp$ is the space of all dyadic distributions $f$ with
	\[
	\dotdyprodhpn{f}:=\lprtwonorm{S_d(f)}<\infty.
	\]
\end{definition}
Now, let us explain the relation between these two (quasi)-norms. When a priori $f$ is a bounded function with compact support, or more generally a function in $L_{loc}^1(\R^2)\cap\lqrtwo$ for some $0<q<\infty$, the two mentioned quantities are equivalent, with constants independent of this a priori information. The inequality 
\begin{equation}\label{mlessthans}
	\lprtwonorm{M_d(f)}\lesssim_p \lprtwonorm{S_d(f)},
\end{equation}
follows from sub-linearity of the operator $M_d$, and an atomic decomposition of $f$, which can be easily derived from the square function \cite{MR1320508,MR0584078}. The other direction 
\begin{equation}\label{slessthanm}
	\lprtwonorm{S_d(f)}\lesssim_p\lprtwonorm{M_d(f)},
\end{equation}
is harder to prove and follows from the following distributional inequality due to Brossard \cite{MR0602392}. See also \cite{MR1320508}.
\begin{theorem*}[Brossard]
There there exists a constant $C$ such that for any compactly supported function $f\in L^2(\R^2)$, and any $\delta>0$, we have 
	\begin{equation}\label{brossard}
		\int_{\left\{\left\{M_d(f)>\delta\right\}^{\sim}\right\}^c}S_d(f)^2\le C\left(\delta^2|\left\{M_d(f)>\delta\right\}|+\int_{\left\{M_d(f)\le\delta\right\}}M_d(f)^2\right).
	\end{equation}
\end{theorem*}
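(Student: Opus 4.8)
\section*{Proof proposal}

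The plan is to set $\Omega:=\{M_d(f)>\delta\}$ and to replace $f$ by the truncated dyadic distribution $g:=\sum_{R\not\subseteq\Omega}f_R h_R$, obtained by discarding every Haar coefficient indexed by a rectangle buried inside $\Omega$. First I would observe that for any $x\in\Omega^c$ --- and in particular for $x\in(\tilde{\Omega})^c$, since $\Omega\subseteq\tilde{\Omega}$ --- every dyadic rectangle $R\ni x$ satisfies $R\not\subseteq\Omega$ (because $x\in R\setminus\Omega$), so $g_R=f_R$ for all such $R$ and hence $S_d(g)(x)=S_d(f)(x)$. Consequently the left-hand side of \eqref{brossard} equals $\int_{(\tilde{\Omega})^c}S_d(g)^2$, and the problem is reduced to estimating the square function of the truncated object on the good set, where $g$ has lost its large interior oscillations.

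Next I would expand $\int_{(\tilde{\Omega})^c}S_d(g)^2=\sum_{R\not\subseteq\Omega}f_R^2\,|R\cap(\tilde{\Omega})^c|/|R|$ and note that a rectangle contributes only when $R\not\subseteq\tilde{\Omega}$, i.e.\ only when some point of $R$ has $M(\chi_\Omega)\le\tfrac12$. Testing the strong maximal operator against $R$ itself, $M(\chi_\Omega)(x)\ge|R\cap\Omega|/|R|$ for $x\in R$, this forces $|R\cap\Omega|\le\tfrac12|R|$, so $|R\cap\Omega^c|\ge\tfrac12|R|>0$; choosing any $x_R\in R\cap\Omega^c$ gives $|\avr{f}{R}|\le M_d(f)(x_R)\le\delta$. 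Thus \emph{every} contributing rectangle has average bounded by $\delta$. This is exactly the substitute --- available only after enlarging $\Omega$ to $\tilde{\Omega}$ --- for the one-parameter device of stopping the martingale at the first scale where the average exceeds $\delta$.

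With this in hand I would split the contributing rectangles into those contained in $\Omega^c$ and those straddling $\partial\Omega$ (meeting both $\Omega$ and $\Omega^c$). The straddling family I would organize according to the superlevel sets of $M(\chi_\Omega)$, which form a contracting family in the sense introduced before Lemma~\ref{lpsparselemma}; since each such rectangle carries average at most $\delta$ and at least half of its mass inside $\Omega$, Lemmas~\ref{lpsparselemma} and \ref{largechunksparselemma} should let me pack these contributions and telescope them into a multiple of $\delta^2|\tilde{\Omega}|$, which is $\lesssim\delta^2|\Omega|$ by the strong-maximal bound $|\tilde{\Omega}|\lesssim|\Omega|$. The interior family, whose square function lives over $\Omega^c$, I would control by comparing its associated maximal function with $M_s(f)$ for a suitable $0<s<1$ and invoking the bi-parameter Fefferman--Stein inequality, producing the term $\int_{\{M_d(f)\le\delta\}}M_d(f)^2$.

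The main obstacle is entirely in the bi-parameter geometry. In one parameter $\Omega$ is a disjoint union of maximal dyadic intervals, so one may stop at them and bound each average by the parent's (which is $\le\delta$); in two parameters no such maximal-rectangle decomposition exists, and $\Omega$ can be geometrically wild. The enlargement $\tilde{\Omega}$, together with $|\tilde{\Omega}|\lesssim|\Omega|$ and the essential-disjointness Lemmas~\ref{lpsparselemma}--\ref{largechunksparselemma} for contracting families, is precisely what replaces the missing stopping-time structure. The delicate point --- and the step I expect to require the most care --- is verifying that the straddling rectangles really do pack with the claimed $\ell^2$/Carleson efficiency, so that their total contribution is only $O(\delta^2|\Omega|)$ rather than the a priori much larger $\int_\Omega M_d(f)^2$.
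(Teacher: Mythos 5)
First, a point of order: the paper does not prove this statement at all --- it is quoted from Brossard \cite{MR0602392} (with Merryfield \cite{MR2611795} cited for the harmonic analogue and Fefferman--Stein \cite{MR0447953} for the one-parameter case), so there is no in-paper proof to compare yours against. Your opening reductions are correct: $S_d(f)=S_d(g)$ on $\Omega^c\supseteq(\tilde{\Omega})^c$ for the truncation $g=\sum_{R\nsubseteq\Omega}f_Rh_R$, a rectangle contributes to $\int_{(\tilde{\Omega})^c}S_d(g)^2$ only if $R\nsubseteq\tilde{\Omega}$, and such a rectangle satisfies $|R\cap\Omega|\le\frac12|R|$ and hence $|\avr{f}{R}|\le\delta$. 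You have also correctly diagnosed that the whole difficulty is the absence of a maximal-rectangle decomposition of $\Omega$.

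The genuine gap is that the quantity you must sum is $\sum_{R\nsubseteq\tilde{\Omega}}f_R^2\,|R\cap(\tilde{\Omega})^c|/|R|$, and the bound $|\avr{f}{R}|\le\delta$ says nothing about the Haar coefficient $f_R$. Writing $f_R$ in terms of the averages of $f$ over the four quadrant children of $R$, note that $|R\cap\Omega|\le\frac12|R|$ still permits up to two of those children to lie entirely inside $\Omega$, where the only available information is $M_d(f)>\delta$ --- a lower bound, not an upper bound --- so $f_R$ can be arbitrarily large relative to $\delta$ even though every ancestor-average of $R$ is at most $\delta$. In one parameter this is exactly what the stopped martingale fixes: replacing $f$ inside $\Omega$ by its averages over the maximal intervals leaves the coefficients $f_I$ ($I\nsubseteq\Omega$) unchanged while making the function pointwise $\lesssim\delta$, so that $\sum_{I\nsubseteq\Omega}f_I^2=\|g\|_{L^2}^2$ closes the estimate. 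No such bounded representative of $\sum_{R\nsubseteq\Omega}f_Rh_R$ exists in two parameters, and neither of your two closing devices supplies a substitute. Lemmas~\ref{lpsparselemma} and \ref{largechunksparselemma} concern a \emph{contracting} family $\Omega_{i+1}\subseteq\Omega_i$ with $|\Omega_{i+1}|\le\frac12|\Omega_i|$; here there is a single set $\Omega$ and its enlargements, and the superlevel sets of $M(\chi_\Omega)$ are nested the wrong way and do not halve in measure, so there is nothing to ``pack.'' Likewise, for the interior family the sum $\sum_{R\ni x,\,R\subseteq\Omega^c}f_Rh_R(x)$ runs over a two-parameter down-set of rectangles and does not telescope into a difference of averages, so it is not pointwise dominated by $M_d(f)(x)$, and Fefferman--Stein does not produce $\int_{\{M_d(f)\le\delta\}}M_d(f)^2$ from it. The step you yourself flag as delicate is in fact the entire theorem; the known proofs (Brossard's martingale argument, Merryfield's bilinear ``key lemma'') use machinery of a completely different nature from the sparse-packing lemmas of this paper, and I do not see how to complete your outline without importing something of that strength.
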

In \cite{MR0602392}, the inequality \eqref{brossard} was proved in the general setting of bi-parameter regular martingales. The analog of this inequality for bi-harmonic functions is due to Merryfield \cite{MR2611795}, and in the one-parameter setting, this inequality was previously established by Fefferman and Stein \cite{MR0447953}. To the best of our knowledge, the only application of this type of inequalities so far has been to prove \eqref{slessthanm}. However, in the next sections, we will show that the inequality \eqref{brossard} can be useful in certain constructions (see lemmas \ref{thetestfunction} and \ref{testfunctioncontinous}). It is also worth mentioning that all the arguments presented in the following sections extend verbatim to any number of parameters. We chose to work in the bi-parameter setting only because we could not find the analog of \eqref{brossard} for a higher number of parameters, though we believe such a theorem should hold \cite{MR2611795,MR0864369}.
\\

Here, we would like to mention that in the literature, the above two spaces, which are different, are both referred to as \(\dyprodhp\). This is also true for the one-parameter version \(H_d^p(\R)\) and the continuous versions \(H^p(\R)\), \(H^p(\R \otimes \R)\), etc. However, these spaces are not identical, and their equivalence must be understood through some a priori information or by using quotient norms. The reason lies in the cancellation within the square function, which is absent in the maximal function. For instance, since all bi-parameter Haar coefficients of a function of the form
\[
E(x_1, x_2) = f_1(x_1) + f_2(x_2)
\]
are zero, adding or subtracting such functions does not affect the bi-parameter dyadic square function but does change the dyadic maximal function.\\

The last function space to recall is the bi-parameter dyadic \(\text{BMO}\).
\begin{definition}
	The space $\dyprodbmo$ is the space of all dyadic distributions $f$ with
	\[
	\dyprodbmon{f}:=\sup_{\Omega} \big(|\Omega|^{-1}\sum_{R\subseteq\Omega}f_R^2\big)^{\frac{1}{2}}<\infty.
	\]
\end{definition}
In the above, the supremum is taken over all open subsets of the plane, and as Carleson famously showed \cite{carleson1974counter}, it is not sufficient to consider only rectangles, which is in sharp contrast with the one-parameter theory, where intervals or cubes can replace open sets.
	The next fact to recall is the bi-parameter John-Nirenberg inequality, which states that for any $0<p<\infty$, we have
\begin{equation}\label{johnnirenberg}
		\dyprodbmon{f}\simeq_p\sup_{\Omega} \avr{S_d(f|\Omega)^p}{\Omega}^{\frac{1}{p}}, \quad S_d(f|\Omega):=\big(\sum_{R\subseteq\Omega}f_R^2\frac{\chi_R}{|R|}\big)^\frac{1}{2},
\end{equation}
(see \cite{MR3052499} for a proof). Throughout the paper, we will use the notation on the right-hand side for localizations of the square function. Last but not least, we recall the well-known C. Fefferman's duality
\begin{equation}\label{honebmo}
	\dot{H}_d^1(\R \otimes \R)^* \cong \dyprodbmo,
\end{equation}
the proof of which in this setting is due to Bernard \cite{MR0539351}. We refer the reader to \cite{MR1320508} for the proofs and an exposition of Hardy spaces in the general setting of martingales.

\subsection{Bi-parameter Hardy spaces in the Continuous Setting} Let $\psi$ be a Schwartz function on $\R^2$ with 
\begin{align}
	&\text{supp}(\hat{\psi})\subseteq \{\xi=(\xi_1,\xi_2)\mid 0<\textbf{a}\le |\xi_1|,|\xi_2|\le \textbf{b}<\infty\},\label{adefinition}\\
	&\sum_{(j_1,j_2)\in\zed^2} \hat{\psi}(2^{-j_1}\xi_1,2^{-j_2}\xi_2)=1,\quad \xi_1,\xi_2\neq0.
\end{align}
Then, the bi-parameter Littlewood-Paley projections and the associated square function of a tempered distribution $f$ are defined as
\begin{align}
	&\Delta_j(f):=\psi_{2^{-j}}*f, \quad \hat{\psi}_{2^{-j}}(\xi):=\hat{\psi}(2^{-j_1}\xi_1,2^{-j_2}\xi_2), \quad j\in\zed^2,\\
	&S_\psi(f)(x):=\big(\sum_{j\in\zed^2}|\Delta_j(f)(x)|^2\big)^{\frac{1}{2}}, \quad x\in\R^2.
\end{align}
In addition, for a Schwartz function $\varphi$ with $\int\varphi=1$ let 
\[
M_\varphi(f)(x):=\sup_{t_1,t_2>0}|\varphi_t*f(x)|, \quad \varphi_t(x):= \frac{1}{t_1t_2}\varphi(\frac{x_1}{t_1},\frac{x_2}{t_2}), \quad x=(x_1,x_2)\in\R^2, \quad t_1,t_2>0.
\]
be the bi-parameter smooth vertical maximal operator and
\[
M_\varphi^*(f)(x):=\sup_{\substack{|x_1-y_1|\le t_1, |x_2-y_2|\le t_2\\t_1,t_2>0}}|\varphi_t*f(y_1,y_2)|,
\]
be its non-tangential analog.
\begin{definition}
For $0<p<\infty$, the space $H^p(\R\otimes \R)$ consists of all tempered distributions $f$ with
	\[
	 \|f\|_{H^p(\R\otimes \R)}:=\lprtwonorm{M_\varphi(f)}<\infty,
	\]
	and $\dot{H}^p(\R\times\R)$ is the space of all tempered distributions with
	\[
	 \|f\|_{\dot{H}^p(\R\otimes \R)}:=\lprtwonorm{S_\psi(f)}<\infty.
	\]
\end{definition}
In the above, both spaces are independent of the choice of $\varphi, \psi$, and $H^p(\R\otimes\R)$ is identical to $\lprtwo$ for $1<p<\infty$. Similar to the dyadic setting, with an a priori information such as $f\in\lqrtwo$ for some $0<q<\infty$, we have
\[
\|f\|_{H^p(\R\otimes \R)}\simeq_{p,\varphi,\psi}\|f\|_{\dot{H}^p(\R\otimes \R)}.
\]
See \cite{MR2611795} and \cite{MR0864369} for the proof of these.\\

Next, we recall quasi-orthogonal expansions and their properties. Let, $\theta$ be a Schwartz function whose Fourier transform is compactly supported and is equal to $1$ on the support of $\hat{\psi}$. Then we may write
\[
\Delta_j(g)=\theta_{2^{-j}}*\Delta_j(g)=\sum_{R \in \mathcal{D}\otimes\mathcal{D}_j}\int_{R}\theta_{2^{-j}}(x-y)\Delta_j(g)(y)dy, \quad j\in\zed^2,
\]
where in th above $\Dtwo_j$	denotes the collection of dyadic rectangles with sides $2^{-j_1}\times2^{-j_2}$ ($j=(j_1,j_2)$). Now, let
\begin{equation}\label{lambdardefinition}
	\lambda_R(g)=\sup_{y\in R}|\Delta_j(g)(y)||R|^{\frac{1}{2}}, \quad a_R(g)=\lambda_R(g)^{-1}\int_{R}\theta_{2^{-j}}(x-y)\Delta_j(g)(y)dy, \quad R\in \Dtwo_j.
\end{equation}	
Then, one can show that
\begin{equation}\label{equivalence}
	\|g\|_{\dot{H}^p(\R\otimes\R)}\simeq \dotdyprodhpn{\sum_{R \in \Dtwo}\lambda_R(g)h_R},\quad g=\sum_{R \in \Dtwo}\lambda_R(g) a_R(g),
\end{equation}	
roughly giving an isomorphism between $\dot{H}^p(\R\otimes\R)$ and $\dotdyprodhp$. In addition, for any sub-collection $\mathcal{C}$ of dyadic rectangles we have
\begin{equation}\label{hplesdyhp}
	\|\sum_{R \in \mathcal{C}}\lambda_R(g)a_R(g)\|_{\dot{H}^p(\R\otimes\R)}\lesssim	 \dotdyprodhpn{\sum_{R \in \mathcal{C}}\lambda_R(g)h_R}.
\end{equation}
The above inequalities follow from almost orthogonality of functions $a_R(g)$, Fefferman-Stein vector valued inequality, and a well-known inequality which captures the local constancy of band-limited functions. Below we bring its bi-parameter version.
\begin{theorem*}
	Let $f$ be a function with $\text{supp}(\hat{f})\subseteq \{\xi\mid |\xi_1|\le t_1, |\xi_2|\le t_2\}$, then we have 
	\begin{equation}\label{localconstancy}
		|f(y)|\lesssim_s(1+t_1t_2|x_1-y_1||x_2-y_2|) M_s(f)(x), \quad 0<s<\infty.
	\end{equation}
\end{theorem*}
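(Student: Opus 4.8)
The plan is to reduce the bi-parameter statement to the classical one-parameter local-constancy inequality by exploiting the product (tensor) structure of the frequency rectangle, and then to organize the resulting estimate around the strong maximal operator. Fix a one-dimensional Schwartz function $\phi$ whose Fourier transform equals $1$ on $[-1,1]$, and set $\Phi_t(u):=t_1t_2\,\phi(t_1u_1)\phi(t_2u_2)$, so that $\widehat{\Phi_t}$ equals $1$ on the rectangle $\{|\xi_1|\le t_1,\ |\xi_2|\le t_2\}$. Since the spectrum of $f$ lies in this rectangle, the reproducing identity $f=f*\Phi_t$ holds. The whole argument then rests on a \emph{sub-mean-value inequality}: I claim that for every $0<s<\infty$ and every $N>1$,
\[
|f(y)|^{s}\lesssim_{s,N}\ t_1t_2\int_{\R^2}|f(u)|^{s}\,(1+t_1|y_1-u_1|)^{-N}(1+t_2|y_2-u_2|)^{-N}\,du .
\]

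To prove this sub-mean-value bound I would first establish its one-dimensional analogue and then tensor. In one variable, for $g$ with $\widehat{g}$ supported in $[-t,t]$, a rescaling reduces matters to $t=1$, after which the reproducing formula $g=g*\phi$ together with the standard Peetre device handles $0<s\le 1$: writing $|g(y-u)|=|g(y-u)|^{1-s}|g(y-u)|^{s}$ and bounding the first factor by the Peetre maximal function of $g$ times $(1+|u|)^{a(1-s)}$ lets one absorb the supremum and leaves a convergent Schwartz kernel acting on $|g|^{s}$; the case $s>1$ is softer and follows from H\"older's inequality. Tensoring this one-dimensional estimate in each coordinate (applying it in $u_1$ for fixed $u_2$, then in $u_2$, via Fubini) yields the displayed two-dimensional sub-mean-value inequality with the product kernel.

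With the sub-mean-value inequality in hand, I would move the base point from $y$ to $x$ using the elementary inequality $1+t_i|x_i-u_i|\le(1+t_i|x_i-y_i|)(1+t_i|y_i-u_i|)$, which gives
\[
(1+t_i|y_i-u_i|)^{-N}\le(1+t_i|x_i-y_i|)^{N}\,(1+t_i|x_i-u_i|)^{-N},\qquad i=1,2 .
\]
Pulling the two penalty factors out of the integral reduces everything to
\[
t_1t_2\int_{\R^2}|f(u)|^{s}(1+t_1|x_1-u_1|)^{-N}(1+t_2|x_2-u_2|)^{-N}\,du,
\]
which, being the integral of $|f|^{s}$ against a product of two $L^1$-normalized radially decreasing one-dimensional kernels centered at $x_1$ and $x_2$, is dominated for $N>1$ by $M(|f|^{s})(x)=M_s(f)(x)^{s}$; here it is precisely the product geometry that forces the \emph{strong} maximal operator to appear rather than a single Hardy--Littlewood maximal function. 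Taking $s$-th roots produces the bi-parameter local-constancy factor $\prod_{i=1}^{2}(1+t_i|x_i-y_i|)^{N/s}$, and in the regime relevant to \eqref{equivalence}, where $x$ and $y$ share a rectangle of dimensions $t_1^{-1}\times t_2^{-1}$, this factor is $O(1)$, which is all that is used in \eqref{equivalence}--\eqref{hplesdyhp}.

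The step I expect to be the main obstacle is the regime $0<s<1$. There the maximal operator must be applied to $|f|^{s}$, so the usual $L^1$ convolution estimates are unavailable, and one genuinely needs the Peetre $s$-power trick to introduce $|f|^{s}$ while retaining a summable kernel; verifying that the supremum defining the Peetre maximal function is finite, so that it may legitimately be absorbed, relies on the a priori decay of band-limited Schwartz data and is the one place requiring care. The remaining ingredients---the reproducing formula, the Fubini-type tensoring, and the domination of a product kernel by the strong maximal operator via a layer-cake decomposition into normalized indicators of rectangles containing $x$---are routine once the one-dimensional sub-mean-value inequality is secured.
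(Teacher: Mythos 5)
Your argument is, in substance, the same one the paper relies on: the paper does not actually prove this theorem but points to the one-parameter proof in the cited reference and asserts that it adapts to several parameters with minor changes, and your proposal is precisely that adaptation (reproducing formula against a tensor-product Schwartz kernel, Peetre's $s$-power trick to get the sub-mean-value inequality for $0<s\le 1$, tensoring coordinate by coordinate via Fubini, shifting the base point by submultiplicativity of $1+t_i|\cdot|$, and dominating the product kernel by the strong maximal operator through a dyadic-shell decomposition). All of these steps are sound. One point worth recording: your method produces the factor $\prod_{i=1}^{2}(1+t_i|x_i-y_i|)^{N/s}$, not the factor $1+t_1t_2|x_1-y_1|\,|x_2-y_2|$ printed in \eqref{localconstancy}. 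The printed factor cannot be correct as stated (take $f=g\otimes h$ band-limited, $x_2=y_2$, and $|x_1-y_1|$ large: the right-hand side then forces $|g(y_1)|\lesssim M_s(g)(x_1)$ uniformly, which fails for a bump of $g$ centered at $y_1$), so the separate one-dimensional Peetre factors your proof yields are the correct form of the inequality; since the only use of \eqref{localconstancy} in the paper is for $x,y$ lying in a common dyadic rectangle of dimensions comparable to $t_1^{-1}\times t_2^{-1}$, where both factors are $O(1)$, this discrepancy is harmless for \eqref{equivalence} and \eqref{hplesdyhp}.
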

See \cite{MR2463316}, p. 94, for the proof in the one-parameter setting. Here, we would like to mention that we could not find a proof of \eqref{equivalence} and \eqref{hplesdyhp} as stated above in the literature. However, the one-parameter arguments presented in \cite{MR2463316} work for any number of parameters with minor changes. See also \cite{MR0584078}.\\

At the end, let us recall the continuous $\text{BMO}$ in the product setting. Similar to the one-parameter theory, $\text{BMO}(\mathbb{R} \otimes \mathbb{R})$ can be defined in terms of Carleson measures, but we do not use this fact here and instead mention its quasi-orthogonal characterization, which is quite similar to \eqref{equivalence}. More precisely, $\text{BMO}(\mathbb{R} \otimes \mathbb{R})$ is the space of all tempered distributions $g$ such that the dyadic distribution
\[
\sum_{R \in \Dtwo}\lambda_R(g)h_R
\]
belongs to $\dyprodbmo$, and the equivalence of norms holds:
\[
\|g\|_{BMO(\mathbb{R} \otimes \mathbb{R})} \simeq \big(\sup_{\Omega}|\Omega|^{-1}\sum_{R\subseteq\Omega}\lambda_R(g)^2\big)^{\frac{1}{2}}.
\]
See \cite{MR0584078} for the proof.
	\section{Bi-parameter Dyadic Paraproducts}In the bi-parameter theory, there are different types of paraproducts arising in the product of two functions \cite{MR3052499}. The one considered here is of the form
	\[
	\pi_g(f) := \sum_{R \in \Dtwo} \langle f \rangle_R g_R h_R,
	\]
	where \( g \) is a dyadic distribution, \( f \in L_{\text{loc}}^1(\R^2) \), and \(\pi_g(f)\) is understood as a dyadic distribution.
	 It is easy to see that
	 \[
	 S_d(\pi_g(f)) \le S_d(g) M_d(f),
	 \]
	 and thus, by Hölder's inequality, we obtain
	 \[
	 \|\pi_g\|_{\dyprodhp \to \dotdyprodhq} \le \dotdyprodhrn{g}, \quad \frac{1}{q} = \frac{1}{p} + \frac{1}{r}, \quad 0 < p, q, r < \infty.
	 \]
Additionally, atomic decomposition together with the John-Nirenberg inequality imply that
\[
\|\pi_g\|_{\dyprodhp\to\dotdyprodhp}\lesssim \dyprodbmon{g}, \quad 0<p<\infty.
\]	
In this section we will show that in both cases the reverse direction holds, and this is the content of our main theorem.
\begin{theorem}\label{maindyadictheorem}
	Let $g$ be a dyadic distribution. Then we have
	\begin{align}
&\|\pi_g\|_{\dyprodhp\to\dotdyprodhq}\simeq\dotdyprodhrn{g}, \quad \frac{1}{q}=\frac{1}{p}+\frac{1}{r}, \quad 0<p,q,r<\infty,\tag{I}\\
&\|\pi_g\|_{\dyprodhp\to\dotdyprodhp}\simeq \dyprodbmon{g}, \quad 0<p<\infty.\tag{II}
	\end{align}
\end{theorem}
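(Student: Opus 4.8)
The two upper bounds $\|\pi_g\|_{\dyprodhp\to\dotdyprodhq}\lesssim\dotdyprodhrn{g}$ and $\|\pi_g\|_{\dyprodhp\to\dotdyprodhp}\lesssim\dyprodbmon{g}$ are already recorded, so the task is to produce matching lower bounds by testing $\pi_g$ against well-chosen inputs. The guiding principle is that $S_d(\pi_g(f))^2=\sum_{R}g_R^2\avr{f}{R}^2\chi_R/|R|$ behaves like $\avr{f}{\cdot}^2\,S_d(g)^2$, so Hölder's inequality $\|S_d(g)F\|_q\le\|S_d(g)\|_r\|F\|_p$ is saturated precisely when $S_d(g)^r\propto F^p$, i.e. when $F\approx S_d(g)^{r/p}$. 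I would therefore build a test function $f$ whose averages satisfy $\avr{f}{R}\approx S_d(g)^{r/p}$ on the rectangles carrying the square-function mass, so that $\|\pi_g(f)\|_{\dotdyprodhq}\approx\dotdyprodhrn{g}^{r/q}$ while $\dyprodhpn{f}\approx\dotdyprodhrn{g}^{r/p}$; since $\tfrac{r}{q}-\tfrac{r}{p}=r(\tfrac1q-\tfrac1p)=1$, dividing gives $\|\pi_g\|\gtrsim\dotdyprodhrn{g}$.

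For part (I) the concrete construction is as follows. Choose thresholds $\sigma_0<\sigma_1<\cdots$ so that the level sets $\Omega_i:=\{S_d(g)>\sigma_i\}$ form a contracting family, i.e. $|\Omega_{i+1}|\le\tfrac12|\Omega_i|$ (possible since $t\mapsto|\{S_d(g)>t\}|$ is decreasing), and set $f:=\sum_i a_i\chi_{\Omega_i}$ with $a_i\ge0$ chosen so that the partial sums $A_i:=\sum_{j\le i}a_j$ obey $A_i\approx\sigma_i^{r/p}$. Since the $\Omega_i$ are nested, $\avr{f}{R}\ge A_i$ for every $R\subseteq\Omega_i$, which gives $S_d(\pi_g(f))\gtrsim A_i\sigma_i$ on the bulk of $\Omega_i\setminus\Omega_{i+1}$; the contracting hypothesis yields $|\Omega_i\setminus\Omega_{i+1}|\ge\tfrac12|\Omega_i|$, and the bookkeeping $(A_i\sigma_i)^q=\sigma_i^{q(r/p+1)}=\sigma_i^r=A_i^p$ together with Lemma \ref{largechunksparselemma} collapses the level sums into $\|\pi_g(f)\|_{\dotdyprodhq}^q\approx\sum_i\sigma_i^r|\Omega_i\setminus\Omega_{i+1}|\approx\dotdyprodhrn{g}^r$. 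On the input side $f$ equals $A_i$ on $\Omega_i\setminus\Omega_{i+1}$, so $\lprtwonorm{f}^p=\sum_iA_i^p|\Omega_i\setminus\Omega_{i+1}|\approx\dotdyprodhrn{g}^r$ by Lemma \ref{lpsparselemma}, and it remains to pass from $\lprtwonorm{f}$ to $\dyprodhpn{f}=\lprtwonorm{M_d(f)}$. Combining the output and input estimates then produces $\|\pi_g\|\gtrsim\dotdyprodhrn{g}^{r/q-r/p}=\dotdyprodhrn{g}$.

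The delicate point — and the reason the one-parameter argument of \cite{shaabani2024operator} must be reworked — is twofold and is where I expect the main obstacle to lie. First, a rectangle through a point $x\in\Omega_i\setminus\Omega_{i+1}$ need not lie inside $\Omega_i$, so the truncated sum $\sum_{R\subseteq\Omega_i}g_R^2\chi_R/|R|$ could a priori miss most of $S_d(g)^2(x)$; Carleson's example forbids reducing to rectangles, so one must work with the enlargements $\tilde{\Omega}_i$ and invoke Brossard's inequality \eqref{brossard} to bound the mass of the rectangles protruding from the level sets, thereby exhibiting a subset $E_i\subseteq\Omega_i\setminus\Omega_{i+1}$ with $|E_i|\ge\eta|\Omega_i|$ on which the localized square function is still $\gtrsim\sigma_i$; Lemma \ref{largechunksparselemma} then legitimizes adding the level contributions without overcounting. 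Second, the maximal control $\lprtwonorm{M_d(f)}\lesssim\lprtwonorm{f}$ is not automatic (for $p\le1$ even $M_d\chi_R$ of a single rectangle fails to be $p$-integrable), and forcing it requires truncating the Haar frequencies of $f$ to rectangles inside $\tilde{\Omega}_i$ and again appealing to \eqref{brossard}; this is the content of the test-function lemma. Finally, part (II) is the $r=\infty$ endpoint of the same scheme: using the John–Nirenberg equivalence \eqref{johnnirenberg} I would fix an open set $\Omega$ nearly realizing $\dyprodbmon{g}\approx\avr{S_d(g|\Omega)^p}{\Omega}^{1/p}$ and test against a single layer $f$ adapted to $\Omega$, built as the $\tilde{\Omega}$-frequency truncation of $\chi_\Omega$ so that $\avr{f}{R}\gtrsim1$ for $R\subseteq\Omega$ and $\dyprodhpn{f}\lesssim|\Omega|^{1/p}$; then $\|\pi_g(f)\|_{\dotdyprodhp}\ge\big(\int_\Omega S_d(g|\Omega)^p\big)^{1/p}=|\Omega|^{1/p}\avr{S_d(g|\Omega)^p}{\Omega}^{1/p}\gtrsim|\Omega|^{1/p}\dyprodbmon{g}$ closes the estimate.
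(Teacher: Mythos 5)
Your overall strategy is the right one and matches the paper's in outline: exhaust the mass of $S_d(g)$ by a contracting family of open sets, test $\pi_g$ against a superposition of (modified) indicator functions with amplitudes $\sigma_i^{r/p}$ chosen to saturate H\"older, and close the estimate with Lemmas \ref{lpsparselemma} and \ref{largechunksparselemma}. The exponent bookkeeping is also correct ($t=\frac{r}{q}-1=\frac{r}{p}$ in the paper's notation). However, there are three concrete gaps. First, and most seriously, once you replace $\chi_{\Omega_i}$ by its Haar-truncated version $\tilde\chi_{\Omega_i}$ (which you correctly identify as necessary for $p\le1$, since $M_d(\chi_R)\notin L^p$), the pieces are no longer nonnegative, and the deterministic lower bound $\avr{f}{R}\ge A_i$ for $R\subseteq\Omega_i$ can be destroyed by negative contributions from the other layers $j\ne i$: Lemma \ref{thetestfunction} only controls $\avr{\tilde\chi_{\Omega_j}}{R}$ for $R\subseteq\Omega_j$ meeting $\Omega_j'$, and says nothing about rectangles not contained in $\Omega_j$. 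The paper resolves exactly this by randomizing, taking $f_\omega=\sum_i\omega_i2^{t\lambda_i}\tilde\chi_{\Omega_i}$ together with random signs $\epsilon_R$ on the Haar coefficients of $g$, and applying Khintchine twice to decouple the layers into the square sum $\sum_{R,i}g_R^2 2^{2t\lambda_i}\avr{\tilde\chi_{\Omega_i}}{R}^2\chi_R/|R|$, from which one keeps only the single good term per rectangle. Your deterministic test function does not survive this step.

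Second, your sets $\Omega_i=\{S_d(g)>\sigma_i\}$ are global level sets, and you propose to control the rectangles protruding from $\Omega_i$ via \eqref{brossard}; but Brossard's inequality compares $S_d$ with $M_d$ of the \emph{same} function, not $S_d(g)$ with its own level sets, so it does not directly bound $\sum_{R\not\subseteq\tilde\Omega_i}g_R^2\chi_R/|R|$ on $\Omega_i$. The paper sidesteps this entirely in Lemma \ref{sparsedominationforg} by building the exhaustion iteratively from the \emph{localized} square functions, $\Omega_{i+1}=\{S_d(g|\Omega_i)>2^{\lambda_i}\}^\sim\cap\Omega_i$, so that the property $\eta|\Omega_i|\le|\{S_d(g|\Omega_i)>2^{\lambda_i-1}\}|$ holds by construction and the equivalence $\sum_i2^{r\lambda_i}|\Omega_i|\simeq\dotdyprodhrn{g}^r$ is proved by an atomic decomposition argument (no Brossard); \eqref{brossard} is used only inside the test-function Lemma \ref{thetestfunction}. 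Third, for part (II) your single-layer argument needs $\avr{f}{R}\gtrsim1$ for \emph{all} $R\subseteq\Omega$, which is precisely what Lemma \ref{thetestfunction} cannot deliver (it is essentially the open question posed at the end of Section 3); with only the weaker property \eqref{thesecondpropertyforf} you control $\int_{\Omega'}S_d(g|\Omega)^p$ but not $\int_{\Omega\setminus\Omega'}S_d(g|\Omega)^p$, and the localized square function may concentrate on the small exceptional set. The paper therefore runs the full iterated, randomized argument with $t=0$ for part (II) as well.
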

Using the duality relation \eqref{honebmo}, we get another characterization of $\dyprodbmo$, which is dual to (II).
\begin{corollary}
	Let $g$ be a function with only finitely many non-zero Haar coefficients. Then for the operator 
	\[
 \pi_g^*(f):=\sum_{R \in \Dtwo}f_Rg_R\frac{\chi_R}{|R|},
	\]
	we have
	\[
		\dyprodbmon{g}\simeq\|\pi_g^*\|_{\dyprodbmo\to\dyprodbmo}.
	\]
\end{corollary}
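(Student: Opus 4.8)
The plan is to recognize $\pi_g^*$ as the $L^2$-adjoint of the paraproduct $\pi_g$ and to deduce the statement from part (II) of Theorem \ref{maindyadictheorem} together with Fefferman's duality \eqref{honebmo}. First I would verify the adjoint relation. Expanding $\pi_g(f)=\sum_R\avr{f}{R}g_Rh_R$ in the $L^2$ pairing and writing $\avr{f}{R}=\int f\cdot|R|^{-1}\chi_R$ gives
\[
\langle \pi_g(f),\phi\rangle=\sum_{R\in\Dtwo}\avr{f}{R}\,g_R\,\phi_R=\int f\Big(\sum_{R\in\Dtwo}g_R\phi_R\frac{\chi_R}{|R|}\Big)=\langle f,\pi_g^*(\phi)\rangle,
\]
so $\pi_g^*(\phi)=\sum_R g_R\phi_R\,\chi_R/|R|$ is exactly the operator in the statement. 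The assumption that $g$ has finitely many non-zero Haar coefficients guarantees that $\pi_g^*(\phi)$ is a genuine function, so no convergence issue arises, and all pairings below may be read as finite sums.

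For the upper bound $\|\pi_g^*\|_{\dyprodbmo\to\dyprodbmo}\lesssim\dyprodbmon{g}$, I would fix $\phi\in\dyprodbmo$ and test $\pi_g^*(\phi)$ against the predual using \eqref{honebmo}:
\[
\dyprodbmon{\pi_g^*(\phi)}\simeq\sup_{\|h\|_{\dot{H}_d^1(\R\otimes\R)}\le 1}\big|\langle \pi_g^*(\phi),h\rangle\big|=\sup_{h}\big|\langle \phi,\pi_g(h)\rangle\big|,
\]
where the last equality is the adjoint relation. Bounding the right-hand side first by Fefferman's inequality and then by part (II) yields $|\langle \phi,\pi_g(h)\rangle|\lesssim\dyprodbmon{\phi}\,\|\pi_g(h)\|_{\dot{H}_d^1(\R\otimes\R)}\lesssim\dyprodbmon{\phi}\,\dyprodbmon{g}\,\|h\|_{H_d^1(\R\otimes\R)}$; since the supremum runs over nice $h$, the equivalence \eqref{mlessthans} lets me replace $\|h\|_{H_d^1(\R\otimes\R)}$ by $\|h\|_{\dot{H}_d^1(\R\otimes\R)}\le 1$, completing the estimate.

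For the lower bound I would run the same duality in reverse. By \eqref{honebmo} and the adjoint relation, for any nice function $f$,
\[
\|\pi_g(f)\|_{\dot{H}_d^1(\R\otimes\R)}\simeq\sup_{\dyprodbmon{\phi}\le 1}\big|\langle \pi_g(f),\phi\rangle\big|=\sup_{\dyprodbmon{\phi}\le 1}\big|\langle f,\pi_g^*(\phi)\rangle\big|,
\]
and for each such $\phi$, Fefferman's inequality gives $|\langle f,\pi_g^*(\phi)\rangle|\lesssim\|f\|_{\dot{H}_d^1(\R\otimes\R)}\,\dyprodbmon{\pi_g^*(\phi)}\le\|f\|_{\dot{H}_d^1(\R\otimes\R)}\,\|\pi_g^*\|_{\dyprodbmo\to\dyprodbmo}$. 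Using \eqref{slessthanm} to pass from $\|f\|_{\dot{H}_d^1(\R\otimes\R)}$ to $\|f\|_{H_d^1(\R\otimes\R)}$ and taking the supremum over nice $f$ gives $\|\pi_g\|_{H_d^1(\R\otimes\R)\to\dot{H}_d^1(\R\otimes\R)}\lesssim\|\pi_g^*\|_{\dyprodbmo\to\dyprodbmo}$, whence part (II) yields $\dyprodbmon{g}\lesssim\|\pi_g^*\|_{\dyprodbmo\to\dyprodbmo}$.

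The main obstacle, and the point requiring the most care, is the mismatch between the maximal-function space $H_d^1(\R\otimes\R)$ that naturally serves as the domain of $\pi_g$ and the square-function space $\dot{H}_d^1(\R\otimes\R)$ whose dual is product $BMO$. Because $\pi_g$ sees $f$ only through its averages $\avr{f}{R}$, it is genuinely defined on $H_d^1(\R\otimes\R)$ rather than on $\dot{H}_d^1(\R\otimes\R)$, and in the bi-parameter setting these two spaces differ precisely on additive functions $f_1(x_1)+f_2(x_2)$. The resolution is to carry out the entire argument on the dense class of nice functions, for which \eqref{mlessthans} and \eqref{slessthanm} give $\|\cdot\|_{H_d^1(\R\otimes\R)}\simeq\|\cdot\|_{\dot{H}_d^1(\R\otimes\R)}$; this interchangeability is exactly what makes the formal identity $\|\pi_g^*\|_{Y^*\to X^*}=\|\pi_g\|_{X\to Y}$ rigorous in this setting.
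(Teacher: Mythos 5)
Your proposal is correct and follows exactly the route the paper intends: the paper offers no written proof of the corollary, stating only that it follows from the duality \eqref{honebmo} applied to part (II), and your argument supplies precisely those details (the adjoint identity, the two dual estimates, and the restriction to nice functions to reconcile $H_d^1(\R\otimes\R)$ with $\dot{H}_d^1(\R\otimes\R)$). No substantive differences to report.
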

The proof of theorem \ref{maindyadictheorem} is based on the following two lemmas.
\begin{lemma}\label{sparsedominationforg}
Let $g$ be a dyadic distribution with Haar coefficients that are zero except for finitely many, and such that for an open subset $\Omega_0$ with finite measure, we have
\[
g=\sum_{R\subseteq\Omega_0}g_Rh_R.
\]
Then, there exists an absolute constant $0<\eta_0<1$ (independent of $g$ and $\Omega_0$) such that for any $0<\eta<\eta_0$, the following holds;\\

There exist a contracting family of open sets $\{\Omega_{i}\}_{i\ge0}$ starting from $\Omega_0$, and a sequence of numbers $\{\lambda_i\in\zed\cup\{-\infty\}\}_{i\ge0}$ such that
\begin{align}
&\eta|\Omega_{i}|\le |\{S_d(g|\Omega_{i})>2^{\lambda_i-1}\}|, \quad \text{if} \quad \lambda_i\in\zed \quad i\ge0,\label{thefirstpropertyforg}\\
&\dotdyprodhrn{g}\simeq_{\eta,r} \big(\sum_{i\ge0}2^{r\lambda_i}|\Omega_{i}|\big)^{\frac{1}{r}}, \quad 0<r<\infty.\label{thesecondpropertyforg}
\end{align}
In the above we use the convention that $2^{-\infty}=0$, and $r\cdot(-\infty)=-\infty$.
\end{lemma}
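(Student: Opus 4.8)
The plan is to build $\{\Omega_i\}$ by a stopping-time recursion on the localized square function and to read off $\lambda_i$ as a quantile level. Fix the absolute constant $C$ with $|\tilde E|\le C|E|$ and set $\eta_0:=\tfrac{1}{2C}$. Given $\Omega_i$ (starting from the prescribed $\Omega_0$), let $\lambda_i$ be the largest integer with $|\{S_d(g|\Omega_i)>2^{\lambda_i-1}\}|\ge\eta|\Omega_i|$ when such an integer exists, and $\lambda_i:=-\infty$ otherwise; in the first case put $U_i:=\{S_d(g|\Omega_i)>2^{\lambda_i}\}$ and in the second $U_i:=\{S_d(g|\Omega_i)>0\}$. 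In either case the choice of $\lambda_i$ gives $|U_i|\le\eta|\Omega_i|$, so setting $\Omega_{i+1}:=\tilde U_i\cap\Omega_i$ produces an open set with $\Omega_{i+1}\subseteq\Omega_i$ and $|\Omega_{i+1}|\le C|U_i|\le C\eta|\Omega_i|\le\tfrac12|\Omega_i|$; hence $\{\Omega_i\}$ is contracting. Property \eqref{thefirstpropertyforg} holds by the very definition of $\lambda_i$, and the $-\infty$ levels contribute nothing to either side of \eqref{thesecondpropertyforg} (one checks the corresponding piece of $g$ is pushed entirely into $\Omega_{i+1}$). Since $g$ has only finitely many nonzero coefficients, $|\Omega_i|\to0$ forces the recursion to become trivial eventually.

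For the lower bound in \eqref{thesecondpropertyforg}, set $E_i:=\{x\in\Omega_i:\,S_d(g|\Omega_i)(x)>2^{\lambda_i-1}\}$ for $\lambda_i\in\zed$; then $E_i\subseteq\Omega_i$, $|E_i|\ge\eta|\Omega_i|$, and $S_d(g)\ge S_d(g|\Omega_i)>2^{\lambda_i-1}$ on $E_i$. Grouping the indices by the value of $\lambda_i$ and applying Lemma \ref{largechunksparselemma} to each level family $A_\lambda=\{i:\lambda_i=\lambda\}$ gives $\sum_{i\in A_\lambda}|E_i|\lesssim_\eta|\bigcup_{i\in A_\lambda}E_i|\le|\{S_d(g)>2^{\lambda-1}\}|$. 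Combining this with $|\Omega_i|\le\eta^{-1}|E_i|$ and the layer-cake comparison $\sum_\lambda2^{r\lambda}|\{S_d(g)>2^{\lambda-1}\}|\simeq_r\dotdyprodhrn{g}^r$ yields $\sum_i2^{r\lambda_i}|\Omega_i|\lesssim_{\eta,r}\dotdyprodhrn{g}^r$.

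For the upper bound I would decompose $g=\sum_i g_i$ with $g_i:=\sum_{R\in\mathcal R_i}g_Rh_R$ and $\mathcal R_i:=\{R\subseteq\Omega_i:\,R\not\subseteq\Omega_{i+1}\}$, so that $S_d(g)^2=\sum_iS_d(g_i)^2$ with each $S_d(g_i)$ supported in $\Omega_i$. The geometric heart is that for $R\in\mathcal R_i$ (with $\lambda_i\in\zed$) the failure $R\not\subseteq\tilde U_i$ forces $|R\cap U_i|\le\tfrac12|R|$, i.e. $S_d(g|\Omega_i)\le2^{\lambda_i}$ on at least half of $R$. Integrating $\sum_{R\in\mathcal R_i,\,R\subseteq\Omega}g_R^2\chi_R/|R|$ over the set $\{S_d(g|\Omega_i)\le2^{\lambda_i}\}$ and using this half-overlap then gives the local energy bound $\sum_{R\in\mathcal R_i,\,R\subseteq\Omega}g_R^2\lesssim2^{2\lambda_i}|\Omega|$ for every open $\Omega$, that is $\dyprodbmon{g_i}\lesssim2^{\lambda_i}$. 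The bi-parameter John–Nirenberg inequality \eqref{johnnirenberg} then upgrades this to $\int S_d(g_i)^p\lesssim_p2^{p\lambda_i}|\Omega_i|$ for every $0<p<\infty$.

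It remains to sum the pieces. For $0<r\le2$, subadditivity of $t\mapsto t^{r/2}$ gives $\dotdyprodhrn{g}^r\le\sum_i\int S_d(g_i)^r\lesssim_r\sum_i2^{r\lambda_i}|\Omega_i|$ at once. For $r>2$ I would argue by duality in $L^{r/2}$, paralleling the proof of Lemma \ref{lpsparselemma}: test against $0\le\phi$ with $\|\phi\|_{L^{(r/2)'}}=1$, bound each $\int_{\Omega_i}S_d(g_i)^2\phi$ by H\"older together with the $L^p$ estimates above and the pointwise domination $\langle\phi^{t'}\rangle_{\Omega_i}\le\inf_{\Omega_i}m(\phi^{t'})$ for a suitable exponent $t'\in(1,(r/2)')$, and then exploit the disjointness of the rings $\Omega_i\setminus\Omega_{i+1}$ (where $|\Omega_i|\le2|\Omega_i\setminus\Omega_{i+1}|$) and the $L^q$-boundedness of the operator $m$ from Lemma \ref{lpsparselemma}; this delivers $\dotdyprodhrn{g}^2\lesssim_r(\sum_i2^{r\lambda_i}|\Omega_i|)^{2/r}$. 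I expect this $r>2$ estimate to be the main obstacle: the pieces $S_d(g_i)$ are \emph{not} pointwise bounded by $2^{\lambda_i}$, so one cannot simply dominate $S_d(g)$ by $\sum_i2^{\lambda_i}\chi_{\Omega_i}$ and invoke Lemma \ref{lpsparselemma} termwise, and the per-scale $BMO$ control must instead be converted into the $L^r$ bound through the contracting geometry.
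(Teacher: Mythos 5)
Your proposal is correct and follows essentially the same route as the paper: the same stopping-time construction of $\{\Omega_i\}$ and $\{\lambda_i\}$ via enlargements of the super-level sets of $S_d(g|\Omega_i)$, the same use of Lemma~\ref{largechunksparselemma} for the direction $\sum_i 2^{r\lambda_i}|\Omega_i|\lesssim \dotdyprodhrn{g}^r$, and the same decomposition $g=\sum_i g_i$ over the rings $\Omega_i\setminus\Omega_{i+1}$ with the half-overlap observation $|R\cap U_i|\le\frac12|R|$ for the converse. The only (valid) variation is in closing the upper bound: you first record $\dyprodbmon{g_i}\lesssim 2^{\lambda_i}$ and invoke the John--Nirenberg inequality \eqref{johnnirenberg} to get $\int S_d(g_i)^p\lesssim 2^{p\lambda_i}|\Omega_i|$ for all $p$, then run the $r>2$ duality through the operator $m$ of Lemma~\ref{lpsparselemma}, whereas the paper works directly with the $L^2$ energy bound, H\"older for $r\le 2$, and $M_d(\varphi)$ in the duality step --- your observation is essentially the content of the paper's remark that the $g_i$ are (unnormalized) atoms for $\dyprodhr$ for all $r$.
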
	
\begin{lemma}\label{thetestfunction}
	For $0<p,\varepsilon\le1$ and any open set of finite positive measure $\Omega\subset\R^2$, there exists a function $\tilde{\chi}_{\Omega}\in \dyprodhp\cap \ltwotwo$ such that
\begin{align}
	&\dyprodhpn{\tilde{\chi}_{\Omega}}\lesssim_{p,\varepsilon} |\Omega|^{\frac{1}{p}},\label{thefirstpropertyfortest}\\
	&|\Omega'|\ge (1-\varepsilon)|\Omega|, \quad \Omega'=\big\{x\in\Omega\mid\quad\inf_{\substack{x\in R\\R\subseteq\Omega}}\avr{\tilde{\chi}_{\Omega}}{R}\ge \frac{1}{2}\big\}.\label{thesecondpropertyforf}	
\end{align}
\end{lemma}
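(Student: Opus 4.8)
The plan is to build $\tilde\chi_\Omega$ as a Haar projection of the honest indicator $\chi_\Omega$ onto the dyadic rectangles contained in a suitable enlargement of $\Omega$, and then to verify the two conclusions separately: the quasi-norm bound \eqref{thefirstpropertyfortest} through the product John--Nirenberg inequality \eqref{johnnirenberg}, and the averaging bound \eqref{thesecondpropertyforf} through Brossard's inequality \eqref{brossard}. Concretely, fix an exponent $s\in(1,2)$ and a small $\delta\in(0,1)$ to be pinned down at the end in terms of $\varepsilon$ and $s$, and set $O:=\{M_d(\chi_\Omega)>\delta\}$. Writing $O^{\sim}$ and $O^{\sim\sim}$ for the one- and two-fold enlargements, one has $\Omega\subseteq O\subseteq O^{\sim}\subseteq O^{\sim\sim}$, $|O^{\sim\sim}|\lesssim|O|$, and $|O|\le|\{M(\chi_\Omega)>\delta\}|\lesssim_s\delta^{-s}|\Omega|$ by the $L^s$-boundedness of the strong maximal operator $M$. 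I take $U:=O^{\sim\sim}$ and define
\[
\tilde\chi_\Omega:=\sum_{R\subseteq U}\langle\chi_\Omega,h_R\rangle\,h_R\in\ltwotwo,\qquad g:=\chi_\Omega-\tilde\chi_\Omega=\sum_{R\not\subseteq U}\langle\chi_\Omega,h_R\rangle\,h_R.
\]

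\textbf{Norm bound.} Since $\tilde\chi_\Omega$ retains only Haar frequencies inside $U$, its square function equals the localization $S_d(\chi_\Omega|U)$ and is supported in $U$. The key input is $\dyprodbmon{\chi_\Omega}\le1$: for any open $V$ one has $\langle\chi_\Omega,h_R\rangle=\langle\chi_{\Omega\cap V},h_R\rangle$ when $R\subseteq V$, so by Bessel's inequality $\sum_{R\subseteq V}\langle\chi_\Omega,h_R\rangle^2\le\|\chi_{\Omega\cap V}\|_{\ltwotwo}^2=|\Omega\cap V|\le|V|$. Hence \eqref{johnnirenberg} gives $\int_U S_d(\chi_\Omega|U)^p\lesssim_p\dyprodbmon{\chi_\Omega}^p|U|\lesssim_{s,\delta}|\Omega|$, and then \eqref{mlessthans} yields $\dyprodhpn{\tilde\chi_\Omega}=\lprtwonorm{M_d(\tilde\chi_\Omega)}\lesssim_p\lprtwonorm{S_d(\tilde\chi_\Omega)}\lesssim_{p,\varepsilon}|\Omega|^{\frac1p}$, which is \eqref{thefirstpropertyfortest}.

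\textbf{Averaging bound.} For a dyadic $Q\subseteq\Omega$ we have $\langle\chi_\Omega\rangle_Q=1$, hence $\langle\tilde\chi_\Omega\rangle_Q=1-\langle g\rangle_Q$ and $|\langle g\rangle_Q|\le M_d(g)(x)$ whenever $x\in Q$; consequently $\Omega\cap\{M_d(g)\le\tfrac12\}\subseteq\Omega'$, so it suffices to prove $|\{M_d(g)>\tfrac12\}|\le\varepsilon|\Omega|$. The reason for enlarging twice is the elementary fact that $R\not\subseteq O^{\sim\sim}$ forces $\langle\chi_{O^{\sim}}\rangle_R\le\tfrac12$, i.e.\ $|R\cap(O^{\sim})^c|\ge\tfrac12|R|$; weighting the Haar energy against this,
\[
\|g\|_{\ltwotwo}^2=\sum_{R\not\subseteq U}\langle\chi_\Omega,h_R\rangle^2\le 2\sum_R\langle\chi_\Omega,h_R\rangle^2\frac{|R\cap(O^{\sim})^c|}{|R|}=2\int_{(O^{\sim})^c}S_d(\chi_\Omega)^2.
\]
Brossard's inequality \eqref{brossard} at height $\delta$ bounds the last integral by $\delta^2|O|+\int_{O^c}M_d(\chi_\Omega)^2$, and the $L^s$-weak estimate for $M$ makes both terms $\lesssim_s\delta^{2-s}|\Omega|$, so $\|g\|_{\ltwotwo}^2\lesssim_s\delta^{2-s}|\Omega|$. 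Choosing $\delta$ small in terms of $\varepsilon,s$ and combining $\|M_d(g)\|_{\ltwotwo}\lesssim\|g\|_{\ltwotwo}$ with Chebyshev gives $|\{M_d(g)>\tfrac12\}|\le 4\|M_d(g)\|_{\ltwotwo}^2\le\varepsilon|\Omega|$, hence $|\Omega'|\ge(1-\varepsilon)|\Omega|$, which is \eqref{thesecondpropertyforf}.

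\textbf{Main obstacle.} The crux is the matching of scales in the averaging bound: the cutoff defining $g$ lives at the level of rectangles \emph{not contained} in the enlargement, while Brossard's inequality only controls the square function on the \emph{complement} of the enlargement, and it is precisely the second enlargement that guarantees each discarded rectangle sheds a definite fraction of its mass into that complement. A secondary, quantitative point is that one must use the $L^s$ ($1<s<2$) integrated bound for $M$ rather than merely $L^2$-boundedness, since the latter makes $\int_{O^c}M_d(\chi_\Omega)^2$ diverge. Finally, to invoke \eqref{brossard} literally one first reduces to compactly supported $\chi_\Omega$ by exhausting $\Omega$ with $\Omega\cap(-k,k)^2$ and letting $k\to\infty$, which is harmless for all the quantities above.
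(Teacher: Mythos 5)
Your construction and argument follow essentially the same route as the paper's: truncate the Haar expansion of $\chi_{\Omega}$ to the rectangles contained in an iterated enlargement, bound the retained part's quasi-norm by a routine Bessel/John--Nirenberg (the paper uses H\"older) estimate, and control the $L^2$-norm of the discarded part via Brossard's inequality \eqref{brossard} together with an $L^s$ ($1<s<2$) bound for the maximal operator and Chebyshev applied to $M_d$. The only differences are minor and correct: the paper defines the third enlargement as a $\delta$-level set of $M_d(\chi_{\Omega_2})$ and splits the error's square function over $\Omega_2$ and $\Omega_2^c$, while you take the $\tfrac12$-level enlargement and absorb the entire discarded Haar energy into $2\int_{(O^{\sim})^c}S_d(\chi_{\Omega})^2$ in a single application of Brossard, a mild streamlining (and your label ``weak estimate'' should read strong $L^s$-boundedness for the term $\int_{\{M_d(\chi_\Omega)\le\delta\}}M_d(\chi_{\Omega})^2$).
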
	
Let us accept these two lemmas and prove Theorem \ref{maindyadictheorem}.
\begin{proof}[Proof of Theorem \ref{maindyadictheorem}]
First, we consider the case (I), and to this aim let $$A=\|\pi_g\|_{\dyprodhp\to\dotdyprodhq}.$$ We must show that 
\begin{equation}\label{theoperatornorm}
	\dotdyprodhrn{g}\lesssim A.
\end{equation}
Now, observe that if $g'$ is a dyadic distribution with finitely many non-zero Haar coefficients which are the same as those of $g$, the operator norm of $\pi_{g'}$ is not larger than $A$. This follows because
\[
S_d(\pi_{g'}(f))\le S_d(\pi_g(f)).
\]
If we can show that \eqref{theoperatornorm} holds with \( g \) replaced by \( g' \), then by the monotone convergence theorem, we can easily deduce \eqref{theoperatornorm} for \( g \). Therefore, without loss of generality, we assume that \( g \) has only finitely many non-zero coefficients, and that the associated dyadic rectangles are all contained in \(\Omega_0\).
We then apply Lemma \ref{sparsedominationforg} to \( g \) with sufficiently small \(\eta\), which yields a finite contracting sequence of open sets \(\{\Omega_i\}_{i \ge 0}\) and numbers \(\{\lambda_i\}_{i \ge 0}\) with the described properties. Next, when \(0 < p \le 1\), we apply Lemma \ref{thetestfunction} with \(\varepsilon = \frac{1}{2}\eta\) to each \(\Omega_i\) to obtain the function \(\tilde{\chi}_{\Omega_i}\). For \(1 < p < \infty\), we set \(\tilde{\chi}_{\Omega_i} = \chi_{\Omega_i}\) for \(i \ge 0\).\\

As the next step, we take a sequence of independent Bernoulli random variables \(\{\omega_i\}_{i \ge 0}\) with \(\mathbb{P}(\omega_i = \pm 1) = \frac{1}{2}\) and construct the following random function:
\[
f_\omega = \sum_{i \ge 0} \omega_i 2^{t \lambda_i} \tilde{\chi}_{\Omega_i}, \quad \text{where } t = \frac{r}{q} - 1.
\]
Now, from \eqref{mlessthans}, we have
\[
\int \big| \sum_{R \in \Dtwo} g_R \avr{f_\omega}{R} h_R \big|^q \lesssim A^q \dyprodhpn{f_\omega}^q.
\]
Additionally, if we choose another sequence of independent Bernoulli random variables \(\{\epsilon_{R}\}_{R \in \Dtwo}\) and multiply these signs to the coefficients of \(g\), the operator norm of the resulting dyadic paraproduct remains unchanged. This yields
\[
\int\big|\sum_{R,i\ge0}\epsilon_{R}\omega_ig_R2^{t\lambda_i}\avr{\tilde{\chi}_{\Omega_{i}}}{R}h_R\big|^q=\int\big|\sum_{R\in\Dtwo}\epsilon_{R}g_R\avr{f_\omega}{R}h_R\big|^q\lesssim A^q\dyprodhpn{f_\omega}^q.
\]
Now, taking the expectation with respect ot both variables and using the bi-parameter version of Khintchine inequality gives us
\[
\int\big(\sum_{R,i\ge0}g_R^22^{2t\lambda_i}\avr{\tilde{\chi}_{\Omega_{i}}}{R}^2\frac{\chi_R}{|R|}\big)^\frac{q}{2}\lesssim A^q\mathbb{E}\dyprodhpn{f_\omega}^q.
\]
Next, call the function under the sign of integral $F$ and rewrite the above inequality as
\begin{equation}\label{bigfequation}
	\|F\|_{L^1(\R^2)}\lesssim A^q\mathbb{E}\dyprodhpn{f_\omega}^q, \quad F=\big(\sum_{R,i\ge0}g_R^22^{2t\lambda_i}\avr{\tilde{\chi}_{\Omega_{i}}}{R}^2\frac{\chi_R}{|R|}\big)^\frac{q}{2}.
\end{equation}
At this point, let us first estimate the right hand side of the above inequality, and assume initially that $0<p\le1$. From sub-additivity, \eqref{thefirstpropertyfortest} in Lemma \ref{thetestfunction}, and the fact that $tp=r$ we obtain
\[
\dyprodhpn{f_\omega}^p\le\sum_{i\ge0}2^{pt\lambda_i}\dyprodhpn{\tilde{\chi}_{\Omega_{i}}}^p\lesssim\sum_{i\ge0}2^{pt\lambda_i}|\Omega_{i}|=\sum_{i\ge0}2^{r\lambda_i}|\Omega_{i}|.
\]
In the case that $1<p<\infty$ we also have
\[
\dyprodhpn{f_\omega}\simeq \lprtwonorm{f_\omega}\le \lprtwonorm{\sum_{i\ge0}2^{t\lambda_i}\chi_{\Omega_i}}\simeq \big(\sum_{i\ge0}2^{r\lambda_i}|\Omega_{i}|\big)^{\frac{1}{p}},
\]
where in the above we used Lemma \ref{lpsparselemma}. Therefore, from the above and \eqref{bigfequation} we must have
\begin{equation}\label{thelonenormofF}
	\|F\|_{L^1(\R^2)}\lesssim A^q\big(\sum_{i\ge0}2^{r\lambda_i}|\Omega_{i}|\big)^{\frac{q}{p}}.
\end{equation}
The next step is to observe that \eqref{bigfequation} implies
\[
F\ge 2^{tq\lambda_i}\big(\sum_{R\subseteq\Omega_i}g_R^2\avr{\tilde{\chi}_{\Omega_{i}}}{R}^2\frac{\chi_R}{|R|}\big)^{\frac{q}{2}}.
\]
Then, since from \eqref{thesecondpropertyforf} we have
\[
\avr{\tilde{\chi}_{\Omega_{i}}}{R}\ge\frac{1}{2}, \quad R\subseteq\Omega_{i}, \quad x\in R, \quad x\in \Omega_{i}',
\]
we conclude that
\begin{equation}\label{Fislarge}
	F(x)\ge 2^{(1+t)q\lambda_i-2q}=2^{r\lambda_i-2q}, \quad x\in E_i= \Omega_i'\cap\left\{S_d(g|\Omega_{i})>2^{\lambda_i-1}\right\}, \quad i\ge0.
\end{equation}
Now, since
\[
|\Omega_{i}'|\ge(1-\frac{1}{2}\eta)|\Omega_{i}|, \quad |\left\{S_d(g|\Omega_{i})>2^{\lambda_i-1}\right\}|\ge \eta|\Omega_{i}|, \quad i\ge0,
\]
we have that
\[
|E_i|\ge\frac{1}{2}\eta|\Omega_{i}|, \quad E_i\subseteq \Omega_{i}, \quad i\ge0,
\]
which together with \eqref{Fislarge} and Lemma \ref{largechunksparselemma} implies that
\begin{align}
		&\sum_{i\ge0} 2^{r\lambda_i}|\Omega_{i}|=\sum_{k\in\zed}2^{rk}\sum_{\lambda_i=k}|\Omega_{i}|\lesssim_{\eta}\sum_{k\in\zed}2^{rk}\sum_{\lambda_i=k}|E_{i}|
		\lesssim\label{d}\\ &\sum_{k\in\zed}2^{rk}|\bigcup_{\lambda_i=k}E_i|\le\sum_{k\in\zed}2^{rk}|\{F>2^{rk-2q}\}|\simeq \|F\|_{L^1(\R^2)}.\label{dddd}
\end{align}
From this and \eqref{thelonenormofF} we obtain
\begin{equation*}
	\sum_{i\ge0} 2^{r\lambda_i}|\Omega_{i}|\lesssim  A^q\big(\sum_{i\ge0}2^{r\lambda_i}|\Omega_{i}|\big)^{\frac{q}{p}},
\end{equation*}
which after noting that the sum appearing on both sides is finite gives us
\begin{equation*}\label{ddd}
	\big(\sum_{i\ge0} 2^{r\lambda_i}|\Omega_{i}|\big)^{\frac{1}{r}}\lesssim A.
\end{equation*}
Finally, we recall \eqref{thesecondpropertyforg} and obtain
\[
\dotdyprodhrn{g}\lesssim A,
\]
which is the desired inequality in \eqref{theoperatornorm}, and this finishes the proof of case (I).\\

Now we turn to the case (II), which can be proven by almost the same argument, and we outline only the required changes. This time, we need to show that for any open subset of finite measure $\Omega_0$ we have
\[
\int S_d(g|\Omega_0)^p\lesssim A^p |\Omega_0|, \quad A=\|\pi_g\|_{\dyprodhp\to\dotdyprodhp}.
\]
We note that if \(1 < p < \infty\), the result follows trivially by inserting the function \(\chi_{\Omega_0}\) into the operator and observing that
\[
S_d(g|\Omega_0) \le S_d(\pi_g(\chi_{\Omega_0})).
\]
Thus, we only need to consider the case \(0 < p \le 1\). In this scenario, the only change required in the argument is to set \(t = 0\) and note that
\[
\dyprodhpn{f_\omega}^p \lesssim |\Omega_0|.
\]
Then, the inequality \eqref{thelonenormofF} is replaced with
\[
\|F\|_{L^1(\R^2)} \lesssim A^p |\Omega_0|,
\]
and in \eqref{d} and \eqref{dddd} we replace \(r\) with \(p\), yielding
\[
\int S_d(g|\Omega_0)^p \simeq \sum_{i \ge 0} 2^{p \lambda_i} |\Omega_i| \lesssim \|F\|_{L^1(\R^2)} \lesssim A^p |\Omega_0|.
\]
This establishes the result we sought, thereby completing the proof of case (II) and the proof of Theorem \ref{maindyadictheorem}.

\end{proof}
Now, we turn to the proof of Lemma \ref{sparsedominationforg}, which is based on an iteration of a well-known argument in multi-parameter theory.
\begin{proof}[Proof of Lemma \ref{sparsedominationforg}]
Let \(\lambda_0\) be the smallest number in \(\mathbb{Z} \cup \{-\infty\}\) such that
\begin{equation}
	|\{S_d(g|\Omega_0) > 2^{\lambda_0}\}| \le \eta |\Omega_0|.
\end{equation}
The above inequality is satisfied for sufficiently large values of \(\lambda_0 \in \mathbb{Z}\), ensuring the existence of such a number. Also, if \(\lambda_0 \in \mathbb{Z}\), we must have
\[
\eta|\Omega_0| \le |\{S_d(g|\Omega_0) > 2^{\lambda_0 - 1}\}|.
\]
Next, let 
\[
\Omega_1 := \{S_d(g|\Omega_0) > 2^{\lambda_0}\}^{\sim} \cap \Omega_0,
\]
and note that if \(\eta\) is small enough, we have
\[
|\Omega_1| \lesssim \eta |\Omega_0| \le \frac{1}{2} |\Omega_0|.
\]
Now, we repeat the process by replacing \(\Omega_0\) with \(\Omega_1\), and get \(\lambda_1\) and \(\Omega_2\). Continuing this, we end up with a sequence of contracting open sets \(\{\Omega_{i}\}_{i \ge 0}\) and numbers \(\lambda_i \in \mathbb{Z} \cup \{-\infty\}\) such that
\begin{align}
	&\Omega_{i+1}=\{S_d(g|\Omega_i)>2^{\lambda_i}\}^{\sim}\cap\Omega_i, \quad i\ge0,\\
	&\eta|\Omega_{i}|\le|\{S_d(g|\Omega_i)>2^{\lambda_i-1}\}|, \quad \text{if} \quad \lambda_i\in\zed.\label{goodchunk}
\end{align}
 Clearly these sets and numbers satisfy \eqref{thefirstpropertyforg}, and it remains to show that \eqref{thesecondpropertyforg} holds as well. First, we show that
 \[
 \sum_{i \ge 0} 2^{r\lambda_i} |\Omega_{i}| \lesssim \|S_d(g)\|_{L^r(\mathbb{R}^2)}^r,
 \]
 which follows from
 \[
 \sum_{i \ge 0} 2^{r\lambda_i} |\Omega_{i}| = \sum_{k \in \mathbb{Z}} \sum_{\lambda_i = k} 2^{rk} |\Omega_{i}| \le \eta^{-1} \sum_{k \in \mathbb{Z}} 2^{rk} \sum_{\lambda_i = k} |\{S_d(g|\Omega_i) > 2^{k-1}\}|,
 \]
 and noting that from \eqref{goodchunk}, we are allowed to apply Lemma \ref{largechunksparselemma} to get
 \[
 \sum_{i \ge 0} 2^{r\lambda_i} |\Omega_{i}| \lesssim \sum_{k \in \mathbb{Z}} 2^{rk} |\{S_d(g) > 2^{k-1}\}| \simeq \|S_d(g)\|_{L^r(\mathbb{R}^2)}^r.
 \]
 In order to get the other direction, we decompose $g$ as
\begin{equation}\label{atomic decomposition}
	g=\sum_{i\ge0}g_i, \quad g_i:=\sum_{\substack{R\subseteq\Omega_{i}\\R\nsubseteq\Omega_{i+1}}}g_Rh_R, \quad i\ge0,
\end{equation}
with the convention that if $\Omega_{i}=\emptyset$, then $g_i=0$. Then, we consider two separate cases either $0<r\le2$ or $2<r<\infty$. For the first case, we apply sub-linearity and get
\[
\lrrtwonorm{S_d(g)}^r=\int \big|\sum_{i\ge0}S_d(g_i)^2\big|^{\frac{r}{2}}\le\sum_{i\ge0}\int_{\Omega_{i}} \big|S_d(g_i)^2\big|^{\frac{r}{2}}\le\sum_{i\ge0}|\Omega_{i}|^{1-\frac{r}{2}}\left(\int S_d(g_i)^2\right)^{\frac{r}{2}},
\]
where in the last estimate we used H\"older's inequality. Therefore, it is enough to show that
\begin{equation}\label{lonenormofs2}
	\int S_d(g_i)^2 \lesssim 2^{2\lambda_i} |\Omega_{i}|, \quad i \ge 0,
\end{equation}
which together with the previous estimate gives us
\[
\|S_d(g)\|_{L^r(\mathbb{R}^2)}^r \lesssim \sum_{i \ge 0} 2^{r\lambda_i} |\Omega_{i}|,
\]
which is what we are looking for. Now, to see why \eqref{lonenormofs2} holds, note that
\[
\int S_d(g_i)^2 = \sum_{\substack{R \subseteq \Omega_{i} \\ R \nsubseteq \Omega_{i+1}}} g_R^2 \le 2 \sum_{\substack{R \subseteq \Omega_{i} \\ R \nsubseteq \Omega_{i+1}}} g_R^2 \left| \{S_d(g|\Omega_{i}) > 2^{\lambda_i}\}^c \cap R \right| |R|^{-1},
\]
where the last inequality follows from the fact that \( R \nsubseteq \Omega_{i+1} \), and this gives us
\[
\int S_d(g_i)^2 \le 2 \int_{\{S_d(g|\Omega_{i}) > 2^{\lambda_i}\}^c} S_d(g|\Omega_{i})^2 \le 2 \cdot 2^{2\lambda_i} |\Omega_{i}|, \quad i \ge 0,
\]
showing that \eqref{lonenormofs2} holds, and the proof of this case is finished. Next, we consider the case \(2 < r < \infty\), which follows from duality together with a similar argument to the one presented above. Let \(\varphi\) be a function with \(\|\varphi\|_{L^{\left(\frac{r}{2}\right)'}(\mathbb{R}^2)} = 1\).
 Then we have
\begin{align*}
	&\int S_d(g)^2\varphi=\sum_{i\ge0}\int S_d(g_i)^2\varphi =\sum_{i\ge0}\sum_{\substack{R\subseteq\Omega_{i}\\R\nsubseteq\Omega_{i+1}}}g_R^2\avr{\varphi}{R}\le\\
	&2\sum_{i\ge0}\sum_{\substack{R\subseteq\Omega_{i}\\R\nsubseteq\Omega_{i+1}}}g_R^2|\{S_d(g|\Omega_{i})>2^{\lambda_i}\}^c\cap R||R|^{-1}|\avr{\varphi}{R}|\le\\
	&2\sum_{i\ge0} \int_{\{S_d(g|\Omega_{i})>2^{\lambda_i}\}^c}S_d(g|\Omega_{i})^2M_d(\varphi)\le2\int \sum_{i\ge0}2^{2\lambda_i}\chi_{\Omega_{i}}M_d(\varphi)\le\\
	&2\|\sum_{i\ge0}2^{2\lambda_i}\chi_{\Omega_{i}}\|_{L^{\frac{r}{2}}(\R^2)}\|M_d(\varphi)\|_{L^{(\frac{r}{2})'}(\R^2)}\lesssim \left(\sum_{i\ge0}2^{r\lambda_i}|\Omega_{i}|\right)^{\frac{2}{r}},
\end{align*}
where for the last estimate Lemma \ref{lpsparselemma} is used, and this shows that
\[
\lrrtwonorm{S_d(g)}\lesssim\big(\sum_{i\ge0}2^{r\lambda_i}|\Omega_{i}|\big)^{\frac{1}{r}},
\]
which completes the proof of this case and Lemma \ref{sparsedominationforg}.
\end{proof}
\begin{remark}
The decomposition \eqref{atomic decomposition}
\[
g = \sum_{i \ge 0} g_i, \quad g_i := \sum_{\substack{R \subseteq \Omega_i \\ R \nsubseteq \Omega_{i+1}}} g_R h_R, \quad i \ge 0,
\]
is an atomic decomposition of \( g \) with the property that the supporting open sets of its atoms form a contracting family. Indeed, the above argument shows that \( g_i \) are \( L^p \)-atoms (although not normalized) for \(\dyprodhr\) for any positive values of \( p \) and \( r \). See \cite{MR0671315} for the counterpart of this result in the one-parameter theory. See also \cite{MR2157745} p. 42.

\end{remark}	
As our last job in this section we give the proof of Lemma \ref{thetestfunction}.
\begin{proof}[Proof of Lemma \ref{thetestfunction}]
	Let $0<\delta<1$ be a small number to be determined later, and consider the following three enlargements of $\Omega$
	\[
	\Omega_1:=\{M_d(\chi_{\Omega})>\delta\}, \quad \Omega_2:=\tilde{\Omega}_1, \quad \Omega_3:=\{M_d(\chi_{\Omega_2})>\delta\},
	\]
	and let
	\[
\tilde{\chi}_\Omega=\chi_{\Omega}-f,\quad	f=\sum_{R\nsubseteq\Omega_3}\avr{\chi_{\Omega},h_R}{}h_R.
	\]
	Then since
	\[
	\tilde{\chi}_\Omega=\sum_{R\subseteq\Omega_3}\avr{\chi_{\Omega},h_R}{}h_R,
	\]
	from H\"older's inequality and the fact that $|\Omega_3|\lesssim_{\delta}|\Omega|$ we have
	\[
	\int S_d(\tilde{\chi}_\Omega)^p\le |\Omega_3|^{1-\frac{p}{2}}\left(\int S_d(\chi_{\Omega})^2\right)^{\frac{p}{2}}\lesssim_{\delta,p}|\Omega|.
	\]
So, $\tilde{\chi}_\Omega$ satisfies the first property in \eqref{thefirstpropertyfortest}, and it remains to choose $\delta$ so small that the second property holds as well. To this aim, we note that
\[
\avr{\tilde{\chi}_\Omega}{R}+\avr{f}{R}=1, \quad R\subseteq \Omega,
\]
and thus
\[
\Omega'=\big\{x\in\Omega|\quad\inf_{\substack{x\in R\\R\subseteq\Omega}}\avr{\tilde{\chi}_{\Omega}}{R}\ge \frac{1}{2}\big\}\supseteq\{M_d(f)\ge\frac{1}{2}\}^c\cap\Omega,
\]
which implies that if $f$ is such that we have
\begin{equation}\label{badset}
	|\{M_d(f)\ge\frac{1}{2}\}|\le \varepsilon|\Omega|,
\end{equation}
then $\tilde{\chi}_{\Omega}$ satisfies \eqref{thesecondpropertyforf} and we are done. Now since we have
\begin{equation}\label{ifltoissmal}
	|\{M_d(f)\ge\frac{1}{2}\}|\lesssim \int_{\R^2} |f|^2,
\end{equation}
it is enough to show that the right hand is small. Here, we observe that
\begin{equation}\label{decompositionforf}
	\int_{\R^2} |f|^2=\int_{\R^2} S_d(f)^2=\int_{\Omega_2} S_d(f)^2+\int_{\Omega_2^c} S_d(f)^2,
\end{equation}
and may estimate the first term by
\begin{equation}\label{thesecondpart}
	\int_{\Omega_2} S_d(f)^2=\sum_{R\nsubseteq\Omega_3}\avr{\chi_{\Omega},h_R}{}^2 \frac{|\Omega_2\cap R|}{|R|}\le \delta \sum_{R\nsubseteq\Omega_3}\avr{\chi_{\Omega},h_R}{}^2\le \delta|\Omega|,
\end{equation}
where in the above we used the fact that $R\nsubseteq\Omega_3$. For the second term we have
\[
\int_{\Omega_2^c} S_d(f)^2\le\int_{\Omega_2^c} S_d(\chi_{\Omega})^2\lesssim \delta^2|\{M_d(\chi_{\Omega})>\delta\}|+\int_{\left\{M_d(\chi_{\Omega})\le\delta\right\}}M_d(\chi_{\Omega})^2,
\]
which follows from \eqref{brossard}. Now, using boundedness of $M_d$ of $L^{\frac{3}{2}}(\R^2)$ yields
\begin{equation}\label{firstpart}
	\int_{\Omega_2^c} S_d(f)^2\lesssim\delta^2|\{M_d(\chi_{\Omega})>\delta\}|+\delta^\frac{1}{2}\int_{\left\{M_d(\chi_{\Omega})\le\delta\right\}}M_d(\chi_{\Omega})^{\frac{3}{2}}\lesssim \delta^\frac{1}{2}|\Omega|,
\end{equation}
and putting \eqref{ifltoissmal},\eqref{decompositionforf}, \eqref{thesecondpart}, and \eqref{firstpart} together gives us
\[
|\{M_d(f)\ge\frac{1}{2}\}|\lesssim\int_{\R^2} |f|^2\lesssim \delta^\frac{1}{2}|\Omega|,
\]
showing that by choosing $\delta$ small enough \eqref{badset} holds, and this completes the proof.
\end{proof}	

Here, it is worth mentioning that in the one-parameter theory the same construction yields a function with the stronger property that
	\[
	\tilde{\chi}_{\Omega}(x)\ge \frac{1}{2}, \quad x\in\Omega.
	\]
To see this, let $f=\sum_{I\nsubseteq\tilde{\Omega}}f_Ih_I$ and $\tilde{\chi}_\Omega=\chi_{\Omega}-f$. Then, the function $f$, is constant on each maximal dyadic interval of $\tilde{\Omega}$, and thus is not larger than $\frac{1}{2}$ on $\Omega$, which proves the above inequality. Regarding this, we ask the following question:\\

\textbf{Question.} Let $0<p\le1$, and $\Omega$ be an open subset with $|\Omega|<\infty.$ Does there exist a function $\tilde{\chi}_\Omega$ with the following two properties?
\[
\dyprodhpn{\tilde{\chi}_\Omega}\lesssim_p |\Omega|^{\frac{1}{p}}, \quad \tilde{\chi}_\Omega(x)\ge \frac{1}{2}, \quad x\in\Omega.
\]

\section{Bi-Parameter Fourier Paraproducts}	In this section we explain how similar results can be obtained for Fourier paraproducts of the form
\[
\Pi_g(f)(x):=\sum_{j\in\zed^2}\varphi_{2^{-j}}*f(x)\Delta_j(g)(x), \quad x\in\R^2,
\]
where in the above $\hat{\varphi}\subseteq\{\xi\mid|\xi_1|,|\xi_2|\le \textbf{a}'\}$ with $\textbf{a}'<\textbf{a}$, and $\textbf{a}$ is the same as in \eqref{adefinition}. Then, one can show that
\begin{align*}
	&\|\Pi_g\|_{H^p(\R\otimes\R)\to\dot{H}^q(\R\otimes\R)}\lesssim \|g\|_{\dot{H}^r(\R\otimes\R)}, \quad \frac{1}{q}=\frac{1}{p}+\frac{1}{r}, \quad 0<p,q,r<\infty,\\
	&\|\Pi_g\|_{H^p(\R\otimes\R)\to\dot{H}^p(\R\otimes\R)}\lesssim \|g\|_{BMO(\R\otimes\R)}, \quad 0<p<\infty.
\end{align*}
Indeed, the above inequalities follow from the support properties of $\hat{\varphi}$ and $\hat{\psi}$ and
\begin{align}
		&\|\mathcal{S}_g(f)\|_{\lqrtwo}\lesssim\|g\|_{\dot{H}^r(\R^2)} \|f\|_{H^p(\R\otimes\R)},\quad \frac{1}{q}=\frac{1}{p}+\frac{1}{r}, \quad 0<p,q,r<\infty,\label{boundforsg}\\
		&\|\mathcal{S}_g(f)\|_{\lprtwo}\lesssim \|g\|_{BMO(\R\otimes\R)}\|f\|_{H^p(\R\otimes\R)}, \quad 0<p<\infty,\label{lplp}
\end{align}
where in the above
\[
\mathcal{S}_g(f)(x):=\big(\sum_{j\in\zed^2}|\varphi_{2^{-j}}*f(x)\Delta_j(g)(x)|^2\big)^\frac{1}{2}, \quad x\in\R^2,
\]
\cite{MR3052499}.
Here, we show that the converse of \eqref{boundforsg} and \eqref{lplp} holds.
\begin{theorem}\label{fouriertheorem}
	Let g be a tempered distribution, and $\mathcal{S}_g$ be as above. Then,
	\begin{align}
		&\|\mathcal{S}_g\|_{H^p(\R\otimes\R)\to L^q(\R^2)}\simeq \|g\|_{\dot{H}^r(\R\otimes\R)}, \quad \frac{1}{q}=\frac{1}{p}+\frac{1}{r}, \quad 0<p,q,r<\infty,\tag{i}\\
		&\|\mathcal{S}_g\|_{H^p(\R\otimes\R)\to L^p(\R^2)}\simeq \|g\|_{BMO(\R\otimes\R)}, \quad \quad 0<p<\infty.\tag{ii}
	\end{align}
\end{theorem}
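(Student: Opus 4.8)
The plan is to transplant the proof of Theorem~\ref{maindyadictheorem} to the Fourier setting, using the quasi-orthogonal dictionary \eqref{equivalence}--\eqref{hplesdyhp} that converts $\dot H^r(\R\otimes\R)$ into $\dotdyprodhr$. The upper bounds \eqref{boundforsg} and \eqref{lplp} are already available, so only the two lower bounds are at stake; I will carry out (i) in detail and indicate the modifications for (ii). Writing $A:=\|\mathcal{S}_g\|_{H^p(\R\otimes\R)\to L^q(\R^2)}$ and passing to the dyadic proxy $\tilde g:=\sum_{R\in\Dtwo}\lambda_R(g)h_R$, a standard truncation in frequency and space (together with $\mathcal{S}_{g'}\le\mathcal{S}_g$ for any frequency restriction $g'$ of $g$, and monotone convergence) lets me assume $\tilde g$ has finitely many non-zero coefficients supported in an open set $\Omega_0$ of finite measure. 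Then Lemma~\ref{sparsedominationforg} applied to $\tilde g$ yields a contracting family $\{\Omega_i\}_{i\ge0}$ and exponents $\{\lambda_i\}$ satisfying \eqref{thefirstpropertyforg} and, through \eqref{equivalence}, $\|g\|_{\dot H^r(\R\otimes\R)}\simeq\big(\sum_{i\ge0}2^{r\lambda_i}|\Omega_i|\big)^{1/r}$.

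I then build the randomized test function exactly as before. For $0<p\le1$ I invoke the continuous analog of Lemma~\ref{thetestfunction}, namely Lemma~\ref{testfunctioncontinous} (established in parallel from Brossard's inequality \eqref{brossard}), to produce for each $i$ a function $f_{\Omega_i}\in H^p(\R\otimes\R)\cap\ltwotwo$ with $\|f_{\Omega_i}\|_{H^p(\R\otimes\R)}\lesssim|\Omega_i|^{1/p}$ whose smoothed averages $\varphi_{2^{-j}}*f_{\Omega_i}$ exceed $\tfrac12$ on a portion $\Omega_i'$ of $\Omega_i$ with $|\Omega_i'|\ge(1-\tfrac12\eta)|\Omega_i|$, for all scales $j$ relevant to $\Omega_i$; for $1<p<\infty$ I use smoothed indicators of $\Omega_i$. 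With $t=\frac rq-1$ (so $tp=r$) set $f_\omega=\sum_{i\ge0}\omega_i2^{t\lambda_i}f_{\Omega_i}$ over independent Bernoulli signs, so that $\|f_\omega\|_{H^p(\R\otimes\R)}\lesssim\big(\sum_i2^{r\lambda_i}|\Omega_i|\big)^{1/p}$ uniformly in $\omega$ (by subadditivity for $p\le1$ and Lemma~\ref{lpsparselemma} for $p>1$). The essential simplification over the dyadic case is that $\mathcal{S}_g$ already produces a square function, so \emph{no} sign-randomization of $g$ is needed: since $\mathcal{S}_g(f_\omega)(x)=\big\|\big(\varphi_{2^{-j}}*f_\omega(x)\,\Delta_j(g)(x)\big)_j\big\|_{\ell^2}$ depends linearly on $\omega$ through the vectors $c_i(x)=\big(2^{t\lambda_i}\varphi_{2^{-j}}*f_{\Omega_i}(x)\Delta_j(g)(x)\big)_j$, the vector-valued Khintchine (Khintchine--Kahane) inequality in $\ell^2(\zed^2)$ gives, for every $x$,
\[
\mathbb{E}_\omega\,\mathcal{S}_g(f_\omega)(x)^q\simeq_q\Big(\sum_{i\ge0}\sum_{j\in\zed^2}2^{2t\lambda_i}\,|\varphi_{2^{-j}}*f_{\Omega_i}(x)|^2\,|\Delta_j(g)(x)|^2\Big)^{q/2}=:G(x).
\]
Integrating this and using $\|\mathcal{S}_g(f_\omega)\|_{L^q}\le A\|f_\omega\|_{H^p}$ produces the exact analog of \eqref{thelonenormofF}, namely $\int G\lesssim A^q\big(\sum_i2^{r\lambda_i}|\Omega_i|\big)^{q/p}$.

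It remains to bound $\int G$ from below by $\sum_i2^{r\lambda_i}|\Omega_i|$. Discarding all blocks but the $i$-th and using $\varphi_{2^{-j}}*f_{\Omega_i}\ge\tfrac12$ on $\Omega_i'$, this reduces to producing sets $E_i\subseteq\Omega_i$ with $|E_i|\gtrsim\eta|\Omega_i|$ on which $\sum_{j:\,R\subseteq\Omega_i}|\Delta_j(g)(x)|^2\gtrsim2^{2\lambda_i}$, the sum being over scales whose rectangle $R\ni x$ lies in $\Omega_i$; on such $E_i$ one gets $G(x)\gtrsim2^{r\lambda_i}$, and Lemma~\ref{largechunksparselemma} sums the measures exactly as in \eqref{d}--\eqref{dddd} to give $\big(\sum_i2^{r\lambda_i}|\Omega_i|\big)^{1/r}\lesssim A$, hence $\|g\|_{\dot H^r(\R\otimes\R)}\lesssim A$. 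For part (ii) the same scheme runs with $t=0$, a single set $\Omega_0$ (no randomization), and the $\lambda_R$-characterization of $BMO(\R\otimes\R)$ together with the John--Nirenberg equivalence \eqref{johnnirenberg}, reducing the claim to $\int S_d(\tilde g|\Omega_0)^p\lesssim A^p|\Omega_0|$.

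The main obstacle is precisely the lower bound $\sum_{j:\,R\subseteq\Omega_i}|\Delta_j(g)(x)|^2\gtrsim2^{2\lambda_i}$ on a set of proportional measure. The inequality $|\Delta_j(g)(x)|\le\lambda_R(g)|R|^{-1/2}$ only yields the pointwise domination $\sum_{j}|\Delta_j(g)(x)|^2\le S_d(\tilde g|\Omega_i)(x)^2$, which is the \emph{wrong} direction, so the level-set information $\eta|\Omega_i|\le|\{S_d(\tilde g|\Omega_i)>2^{\lambda_i-1}\}|$ supplied by Lemma~\ref{sparsedominationforg} does not transfer to the continuous square function for free. The tool to reverse this is the local constancy estimate \eqref{localconstancy}: for each $R$ the band-limited function $\Delta_j(g)$ stays comparable to its supremum $\lambda_R(g)|R|^{-1/2}$ on a fixed fraction of $R$, whence $\int_R|\Delta_j(g)|^2\simeq\lambda_R(g)^2$ and $\int_{\Omega_i}\sum_{j:R\subseteq\Omega_i}|\Delta_j(g)|^2\simeq\int S_d(\tilde g|\Omega_i)^2$. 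Converting this $L^2$-comparison, at a single stopping level, into a pointwise lower bound on a set $E_i$ of measure $\gtrsim\eta|\Omega_i|$ --- isolating the good scales at each point and controlling the loss from the part of each $R$ where $|\Delta_j(g)|$ dips below a fixed multiple of its supremum, by a disjointness argument in the spirit of Lemma~\ref{largechunksparselemma} --- is the delicate step where the continuous theory genuinely departs from the dyadic one.
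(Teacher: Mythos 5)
Your overall strategy is sound and most of the scaffolding (reduction to the dyadic proxy $\sum_R\lambda_R(g)h_R$, Lemma~\ref{sparsedominationforg}, the test functions of Lemma~\ref{testfunctioncontinous}, the Khintchine step) matches the paper. But there is a genuine gap exactly where you flag ``the delicate step'': because you test $\mathcal{S}_g$ directly, the quantity you must bound from below on a proportional subset of each $\Omega_i$ is $\sum_{j}|\Delta_j(g)(x)|^2$ evaluated at the point $x$ itself, whereas the level-set information supplied by Lemma~\ref{sparsedominationforg} concerns $S_d(\tilde g|\Omega_i)$, which is built from the suprema $\lambda_R(g)=|R|^{1/2}\sup_{y\in R}|\Delta_j(g)(y)|$. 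As you yourself observe, the pointwise comparison only goes the wrong way, and the repair you sketch does not close the gap: local constancy does give $\int_R|\Delta_j(g)|^2\simeq\lambda_R(g)^2$ scale by scale, but the portions of $R$ on which $|\Delta_j(g)|$ is comparable to its supremum can be located differently for different $j$, so an $L^2$-comparison at a single stopping level does not produce a single set of measure $\gtrsim\eta|\Omega_i|$ on which the full $j$-sum is $\gtrsim 2^{2\lambda_i}$. This is not a routine technicality --- it is precisely the kind of passage from integrated to distributional control that Brossard-type inequalities exist to handle --- and your proposal leaves it unresolved.

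The paper sidesteps the issue entirely. Instead of testing $\mathcal{S}_g$ directly, it introduces the auxiliary operator
\[
\pi'(f):=\sum_{j\in\zed^2}\sum_{R\in\Dtwo_j}\varphi_{2^{-j}}*f(x_R)\,\lambda_R(g)\,h_R,
\]
where $x_R\in R$ is a point at which $|\Delta_j(g)|$ attains its supremum on $R$. The local-constancy estimate \eqref{localconstancy} is used in the direction in which it is actually available --- the supremum over $R$ of the band-limited product $\varphi_{2^{-j}}*f\cdot\Delta_j(g)$ is dominated by $M_s$ of that product at every point of $R$ --- and together with the Fefferman--Stein inequality this yields $\|\pi'\|_{H^p(\R\otimes\R)\to\dotdyprodhq}\lesssim A$. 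From that point on the argument is literally the dyadic one: the square function of $\pi'(f)$ is $S_d$ of a dyadic distribution whose coefficients carry $\lambda_R(g)$, so the object that Lemma~\ref{sparsedominationforg} controls appears on the nose, and the only genuinely new ingredient is that $\varphi_{2^{-j}}*\tilde{\tilde{\chi}}_{\Omega_i}(x_R)\ge\frac12$ whenever $R\subseteq\Omega_i$, which is exactly property \eqref{2222} of Lemma~\ref{testfunctioncontinous}. If you reorganize your proof around such an intermediate dyadic-valued operator, the lower-bound difficulty you identified disappears.
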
	

\begin{proof}[Proof of Theorem]
We prove only case (i), since the other case follows with exactly the same argument. Let, $A=\|\mathcal{S}_g\|_{H^p(\R\otimes\R)\to L^q(\R^2)},$
we need to show that 
\[
\|g\|_{\dot{H}^r(\R\otimes\R)}\lesssim A.
\]
To this aim, recall the notation introduced in \eqref{lambdardefinition}, let $x_R\in R$ be such that
\[
|\Delta_jg(x_R)|=\sup_{y\in R}|\Delta_j(g)(y)|, \quad R\in\Dtwo_j, \quad j\in\zed^2,
\]
and define
\[
\pi'(f):=\sum_{j\in\zed^2}\sum_{R \in \Dtwo_j}\varphi_{2^{-j}}*f(x_R)\lambda_R(g)h_R.
\]
Then we note that the function $\varphi_{2^{-j}}*f\Delta_j(g)$, has Fourier support in $\{\xi\mid |\xi_1|\lesssim 2^{j_1},|\xi_2|\lesssim 2^{j_2}\}$, and thus we may apply \eqref{localconstancy} and get
\[
|\varphi_{2^{-j}}*f(x_R)|\sup_{y\in R}|\Delta_j(g)(y)|\le\sup_{y\in R}|\varphi_{2^{-j}}*f(y)\Delta_j(g)(y)|\lesssim M_s(\varphi_{2^{-j}}*f\Delta_j(g))(x), \quad x\in R,
\]
which implies that
\[
\sum_{j\in\zed^2}\sum_{R \in \Dtwo}|\varphi_{2^{-j}}*f(x_R)\lambda_R(g)|^2\frac{\chi_R}{|R|}\lesssim \sum_{j\in\zed^2}M_s(\varphi_{2^{-j}}*f\Delta_j(g))^2(x).
\]
Then applying Fefferman-Stein inequality with $0<s<q$ yields 
\[
\dotdyprodhqn{\pi'(f)}\lesssim \|\big(\sum_{j\in\zed^2}M_s(\varphi_{2^{-j}}*f\Delta_j(g))^2\big)^\frac{1}{2}\|_{\lqrtwo}\lesssim \|\mathcal{S}_g(f)\|_{\lqrtwo}\le A \|f\|_{H^p(\R\otimes \R)}.
\]
So we have
\[
\|\pi'\|_{H^p(\R\otimes\R)\to\dotdyprodhq}\lesssim A.
\]
Now, it follows from \eqref{equivalence} that our task is the show that 
\[
\dotdyprodhqn{\sum_{R\in\Dtwo}\lambda_R(g)h_R} \lesssim A,
\]
which follows from an argument identical to the one presented in the proof of Theorem \ref{maindyadictheorem} if we replace $\tilde{\chi}_\Omega$ with its counterpart $\tilde{\tilde{\chi}}_\Omega$ in the following lemma, and this completes the proof.

\end{proof}	
	
\begin{lemma}\label{testfunctioncontinous}
	Let $0<p,\varepsilon\le1$, and $\varphi$ be a Schwartz function with $\int\varphi=1$. Then for any open set $\Omega$ with $|\Omega|<\infty$, there exists a function $\tilde{\tilde{\chi}}_\Omega\in H^p(\R\otimes\R)\cap \ltwotwo$ such that
	\begin{align}
		& \|\tilde{\tilde{\chi}}_\Omega\|_{H^p(\R\otimes\R)}\lesssim_{p,\varepsilon} |\Omega|^{\frac{1}{p}},\label{11111}\\
		& |\Omega'|\ge(1-\varepsilon)|\Omega|, \quad \Omega'=\big\{x\in\Omega\mid \inf_{\substack{x,y\in R\\ R\subseteq\Omega, R\in \Dtwo_j, j\in \zed^2}}\varphi_{2^{-j}}*\tilde{\tilde{\chi}}_\Omega(y)\ge\frac{1}{2}\big\}.\label{2222}
	\end{align}
\end{lemma}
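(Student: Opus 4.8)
The plan is to carry the proof of the dyadic Lemma \ref{thetestfunction} over to the band-limited setting, using the quasi-orthogonal dictionary \eqref{lambdardefinition}--\eqref{equivalence} as the bridge. Fix a small $0<\delta<1$, to be pinned down in terms of $\varepsilon$ only at the end, and form three enlargements
\[
\Omega_1:=\{M(\chi_\Omega)>\delta\},\qquad \Omega_2:=\tilde\Omega_1,\qquad \Omega_3:=\{M(\chi_{\Omega_2})>\delta\},
\]
all of measure $\lesssim_\delta|\Omega|$. Expanding the \emph{enlarged} indicator $\chi_{\Omega_2}=\sum_R\lambda_R(\chi_{\Omega_2})a_R(\chi_{\Omega_2})$ as in \eqref{lambdardefinition}, I would set
\[
\tilde{\tilde\chi}_\Omega:=\sum_{R\subseteq\Omega_3}\lambda_R(\chi_{\Omega_2})\,a_R(\chi_{\Omega_2}),\qquad f:=\chi_{\Omega_2}-\tilde{\tilde\chi}_\Omega=\sum_{R\nsubseteq\Omega_3}\lambda_R(\chi_{\Omega_2})\,a_R(\chi_{\Omega_2}).
\]
The replacement of the bare indicator $\chi_\Omega$ (used dyadically) by the smoothed $\chi_{\Omega_2}$ is forced on us: dyadically one had the exact identity $\avr{\chi_\Omega}{R}=1$ for $R\subseteq\Omega$, but the smooth averages $\varphi_{2^{-j}}*\chi_\Omega(y)$ over boundary rectangles are not close to $1$, and only after enlarging does the Schwartz tail of $\varphi$ become harmless.

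Property \eqref{11111} should then go exactly as \eqref{thefirstpropertyfortest}: the dyadic-model square function of $\tilde{\tilde\chi}_\Omega$ is supported in $\Omega_3$, so \eqref{hplesdyhp}, Hölder's inequality, and $\sum_R\lambda_R(\chi_{\Omega_2})^2\simeq\|\chi_{\Omega_2}\|_{L^2(\R^2)}^2=|\Omega_2|$ give $\|\tilde{\tilde\chi}_\Omega\|_{\dot H^p(\R\otimes\R)}^p\lesssim|\Omega_3|^{1-\frac p2}|\Omega_2|^{\frac p2}\lesssim_\delta|\Omega|$; since $\tilde{\tilde\chi}_\Omega\in\ltwotwo$, the a priori equivalence $\|\cdot\|_{H^p(\R\otimes\R)}\simeq\|\cdot\|_{\dot H^p(\R\otimes\R)}$ upgrades this to \eqref{11111}, with a constant depending on $\delta$, hence on $\varepsilon$, which is permitted. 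For \eqref{2222} I would split $\varphi_{2^{-j}}*\tilde{\tilde\chi}_\Omega(y)=\varphi_{2^{-j}}*\chi_{\Omega_2}(y)-\varphi_{2^{-j}}*f(y)$. For $R\subseteq\Omega$, $R\in\Dtwo_j$, $y\in R$, any point within $\sim\delta^{-1/2}$ side-lengths of $R$ has strong maximal density of $\Omega$ exceeding $\delta$ and so lies in $\Omega_1\subseteq\Omega_2$; the Schwartz decay of $\varphi$ then yields $\varphi_{2^{-j}}*\chi_{\Omega_2}(y)\ge\frac34$ once $\delta$ is small. Because $x,y\in R\in\Dtwo_j$ forces $|x_i-y_i|\le 2^{-j_i}$, the second term is controlled by the non-tangential maximal function, $|\varphi_{2^{-j}}*f(y)|\le M_\varphi^*(f)(x)$. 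Hence $\Omega\setminus\Omega'\subseteq\{M_\varphi^*(f)>\frac14\}$ and everything reduces to the measure bound $|\{M_\varphi^*(f)>\frac14\}\cap\Omega|\le\varepsilon|\Omega|$.

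This measure bound is the step I expect to be the main obstacle. Dyadically it was nearly free: with base $\chi_\Omega$ the total square-function mass is exactly $|\Omega|$, so splitting over $\Omega_2$ and $\Omega_2^c$ and invoking Brossard's inequality \eqref{brossard} gave $\|f\|_{L^2}^2\lesssim\delta^{1/2}|\Omega|$, whence $|\{M_d(f)>\frac14\}|\lesssim\|f\|_{L^2}^2$ closed the loop. In the continuous setting the enlarged base carries inflated mass $\|\chi_{\Omega_2}\|_{L^2}^2\simeq|\Omega_2|\gg|\Omega|$, so the crude estimate $|\{M_\varphi^*(f)>\frac14\}|\lesssim\|f\|_{L^2}^2$ is hopeless; worse, the atoms one discards include low-frequency ones that are not a priori negligible on $\Omega$. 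The resolution I would pursue is to make quantitative the fact that the surviving atoms of $f$ (those with $R\nsubseteq\Omega_3$ and $\lambda_R(\chi_{\Omega_2})$ non-negligible) are concentrated near $\partial\Omega_2$, which sits deep inside the large enlargement $\Omega_3$ and hence far from $\Omega$: one runs a \emph{localized} version of the continuous (Merryfield) analog of \eqref{brossard} from \cite{MR2611795}, estimating the band-limited square function of $f$ over $\Omega$ rather than globally, and then transfers this to $M_\varphi^*(f)$ through the local-constancy inequality \eqref{localconstancy}. Securing a bound that is a small multiple of $|\Omega|$ --- and not of the inflated $|\Omega_2|$ --- is the crux; once it is in place, choosing $\delta$ small in terms of $\varepsilon$ completes the lemma, and Theorem \ref{fouriertheorem} then follows verbatim from the proof of Theorem \ref{maindyadictheorem}.
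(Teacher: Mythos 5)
Your overall architecture (quasi-orthogonal expansion of an enlarged indicator, truncation to the rectangles contained in a further enlargement, a Brossard/Merryfield inequality to control the discarded part, and the non-tangential maximal function to pass from $f$ to the convolutions $\varphi_{2^{-j}}*f$) is the right one, but the proof as written has a genuine gap, and you have correctly located it yourself: the bound $\int_{\R^2}|f|^2\lesssim \delta^{1/2}|\Omega|$ is never established, and with your choice of base function it is out of reach by this route. Since you take $\Omega_1=\{M(\chi_\Omega)>\delta\}$, you have $|\Omega_2|\simeq\delta^{-1}|\Omega|$, so the total square-function mass of the expansion of $\chi_{\Omega_2}$ is $\simeq\delta^{-1}|\Omega|$, and the $\delta^{1/2}$ gain coming from the Brossard-type estimate only yields $\delta^{-1/2}|\Omega|$, which is useless. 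The ``localized Merryfield'' argument you gesture at is not carried out, and it would have to contend with the non-local nature of $M_\varphi^*$; as it stands the lemma is not proved.

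The fix in the paper is simple and worth internalizing: decouple the two enlargement parameters. The first enlargement $O=\{M(\chi_{\Omega})\ge \alpha^{-1}\}$ is taken with a threshold depending \emph{only} on $\varphi$, where $\alpha$ is chosen so large that $|\varphi|_{2^{-j}}*\chi_{(\alpha R)^c}<\frac14$ on $R$, whence $\varphi_{2^{-j}}*\chi_{O}>\frac34$ on every $R\subseteq\Omega$; this gives $|O|\lesssim_\varphi|\Omega|$ \emph{uniformly in} $\delta$. One then passes to the dyadic model $E=\sum_{R}\lambda_R(\chi_O)h_R$ and runs the $\delta$-dependent construction of Lemma \ref{thetestfunction} verbatim on $E$ (with $O_1=\{M_d(E)>\delta\}$, $O_2=\tilde O_1$, $O_3=\{M_d(\chi_{O_2})>\delta\}$), so that the dyadic Brossard inequality \eqref{brossard} applied to $E$ gives $\int|\tilde f|^2\lesssim\delta^{1/2}|O|\lesssim\delta^{1/2}|\Omega|$, and \eqref{hplesdyhp} transfers this to $\int|f|^2$. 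The remainder of your argument --- the splitting $\varphi_{2^{-j}}*\tilde{\tilde\chi}_\Omega+\varphi_{2^{-j}}*f>\frac34$ on rectangles in $\Omega$, the containment $\Omega\setminus\Omega'\subseteq\{M_\varphi^*(f)>\frac14\}$, and the $L^2$-boundedness of $M_\varphi^*$ --- then closes the proof exactly as you intended.
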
	
\begin{proof}
	First, we note that since $\varphi$ decays rapidly and $\int\varphi=1$, we may choose a large constant $\alpha$, depending only on $\varphi$ such that
	\[
	|\varphi|_{2^{-j}}*\chi_{(\alpha R)^c}(y)<\frac{1}{4}, \quad y\in R, \quad R\in\Dtwo_j, \quad j\in\zed^2,
	\]
where $\alpha R$ is the concentric dilation of $R$ with $\alpha$. Thus, if $O$ is an open subset with the property that $\alpha R\subseteq O$ whenever $R\subseteq \Omega$ we must have
\begin{equation}\label{3333}
		\varphi_{2^{-j}}*\chi_O(y)>\frac{3}{4}, \quad y\in R\subseteq \Omega, \quad R\in\Dtwo_j, \quad j\in\zed^2.
\end{equation}
	So, let 
	\[
	O=\{M(\chi_{\Omega})\ge \alpha^{-1}\},
	\]
	be the first enlargement of $\Omega$, where $M$ is the strong maximal operator. Then, let 
	\[
	\chi_{O}=\sum_{R \in \Dtwo}\lambda_R(\chi_{O})a_R(\chi_{O}),
	\]
	be the quasi-orthogonal expansion of $\chi_{O}$ as in \eqref{equivalence}, and define the function
	\[
	E:=\sum_{R \in \Dtwo}\lambda_R(\chi_{O})h_R.
	\]
	Now, we follow the same strategy as in Lemma \ref{thetestfunction} as it follows. Take a small number $0<\delta<1$, to be determined later and set 
	\[
	O_1:=\{M_d(E)>\delta\}, \quad O_2:=\tilde{O}_1, \quad O_3:=\{M_d(\chi_{O_2})>\delta\},
	\]
	and decompose $E$ and $\chi_{O}$ as
\begin{align}
	&E=\sum_{R\subseteq O_3}E_Rh_R+\sum_{R\nsubseteq O_3}E_Rh_R=\tilde{E}+\tilde{f}\\
	&\chi_{O}=\sum_{R\subseteq O_3}\lambda_R(\chi_{O})a_R(\chi_{O})+\sum_{R\nsubseteq O_3}\lambda_R(\chi_{O})a_R(\chi_{O})=\tilde{\tilde{\chi}}_\Omega+f.
\end{align}
	Next, we note that \eqref{hplesdyhp}, H\"older's inequality, and \eqref{equivalence} imply that
\begin{equation}
\|\tilde{\tilde{\chi}}_\Omega\|_{H^p(\R\otimes\R)}^p\lesssim	\dotdyprodhpn{\tilde{E}}^p\le\int_{O_3}S_d(E|O_3)^p\le|O_3|^{1-\frac{p}{2}}\left
(\int S_d(E)^2\right)^{\frac{p}{2}}\lesssim_{\delta,p}|O|\lesssim_{\delta,p}|\Omega|.
\end{equation}
Therefore, $\tilde{\tilde{\chi}}_\Omega$ satisfies \eqref{11111}, and it remains to show that \eqref{2222} holds as well. Now, similar to Lemma \ref{thetestfunction} we may estimate the $L^2$-norm of $f$ as
\begin{align*}
		&\int_{\R^2}|f|^2\lesssim\int_{\R^2}|\tilde{f}|^2=\int_{O_2}|S_d(\tilde{f})|^2+\int_{O_2^c}|S_d(\tilde{f})|^2\le \sum_{R\nsubseteq O_3}E_R^2\frac{|R\cap O_2|}{|R|} + \int_{O_2^c}|S_d(E)|^2\le\\
		& \delta \sum_{R\nsubseteq O_3}E_R^2 +\int_{\{\{M_d(E)>\delta\}^{\sim}\}^c}|S_d(E)|^2\lesssim \delta \int_{\R^2}|E|^2+ \delta^2 |\{M_d(E)>\delta\}|+\\
		&\int_{M_d(E)\le\delta}M_d(E)^2\lesssim
		 \delta \int_{\R^2}|E|^2 + \delta^{\frac{1}{2}} \int_{\R^2}|E|^{\frac{3}{2}}\lesssim \delta^{\frac{1}{2}}|O|\lesssim \delta^{\frac{1}{2}}|\Omega|.
\end{align*}
In the above we used \eqref{hplesdyhp} and \eqref{brossard}. Next, recall \eqref{3333} and note that
\[
	\varphi_{2^{-j}}*\tilde{\tilde{\chi}}_\Omega(y)+\varphi_{2^{-j}}*f(y)>\frac{3}{4}, \quad y\in R\subseteq \Omega, \quad R\in\Dtwo_j, \quad j\in\zed^2,
\]
which implies that
\[
\Omega'\supseteq \big\{x\mid \sup_{\substack{x,y\in R\\ R\subseteq\Omega, R\in \Dtwo_j, j\in \zed^2}}\varphi_{2^{-j}}*f(y)>\frac{1}{4} \big\}^c\cap\Omega\supseteq\big\{M_\varphi^*(f)(x)>\frac{1}{4}\big\}^c\cap\Omega,
\]
where $M_\varphi^*(f)$ is the bi-parameter non-tangential maximal function of $f$. Finally, from boundedness of $M_\varphi^*$ on $\ltwotwo$ and the smallness of $L^2$-norm of $f$ we get
\[
|\Omega\backslash\Omega'|\le|\{M_\varphi^*(f)(x)>\frac{1}{4}\}|\lesssim \int_{\R^2}|f|^2\lesssim\delta^{\frac{1}{2}}|\Omega|,
\]
and thus if we choose $\delta$ small enough \eqref{2222} holds and this completes the proof.
\end{proof}

\section*{Acknowledgments}
I would like to thank the referee for valuable comments and suggestions on the manuscript.


\end{document}